\newlength{\itemlaenge}
\newtheoremstyle{mytheorem}
  {}
  {}
  {\slshape}
  {}
  {\scshape}
  {.}
  { }
  {}
\newtheoremstyle{mydefinition}
  {}
  {}
  {\upshape}
  {}
  {\scshape}
  {.}
  { }
  {}
\theoremstyle{mytheorem}
\newtheorem{lemma}{Lemma}[section]
\newtheorem{prop}[lemma]{Proposition}
\newtheorem*{prop*}{Proposition}
\newtheorem{cor}[lemma]{Corollary}
\newtheorem{thm}[lemma]{Theorem}
\newtheorem*{thm*}{Theorem}
\theoremstyle{mydefinition}
\newtheorem*{rem*}{Remark}
\newtheorem*{notation*}{Notation}
\newtheorem*{warning*}{Warning}
\newtheorem{defi}[lemma]{Definition}
\newtheorem*{defi*}{Definition}
\newtheorem*{ack}{Acknowledgements}
\newcommand{\bqn}{\begin{equation*}}
\newcommand{\eqn}{\end{equation*}}
\newcommand{\bq}{\begin{equation}}
\newcommand{\eq}{\end{equation}}
\newcommand{\ba}{\begin{aligned}}
\newcommand{\ea}{\end{aligned}}
\newcommand{\be}{\begin{enumerate}}
\newcommand{\ee}{\end{enumerate}}
\newcommand{\tto}[1]{\stackrel{#1}{\longrightarrow}}
\newcommand{\infplus}{\oplus_\infty}
\newcommand{\ver}{\mathrm{vert}}
\newcommand{\G}{\mathcal{G}}
\newcommand{\Aut}{\mathrm{Aut}}
\newcommand{\I}{\textnormal{(i)}}
\newcommand{\II}{\textnormal{(ii)}}
\newcommand{\D}{\mathcal{D}}
\newcommand{\Z}{\mathbb{Z}}
\newcommand{\hooklongrightarrow}{\lhook\joinrel\longrightarrow}
\newcommand{\QM}{\mathrm{QM}}
\newcommand{\deff}{\mathrm{def}\,}
\newcommand{\bbb}{\mathrm{b}}
\newcommand{\thismonth}{\ifcase\month 
  \or January\or February\or March\or April\or May\or June%
  \or July\or August\or September\or October\or November%
  \or December\fi}
\newcommand{\FF}{{\mathbb F}}
\newcommand{\NN}{{\mathbb N}}
\newcommand{\RR}{{\mathbb R}}
\newcommand{\ZZ}{{\mathbb Z}}
\def\h{{\rm H}}
\def\hb{{\rm H}_{\rm b}}
\def\ehb{{\rm EH}_{\rm b}}
\def\cb{{\rm C}_{\rm b}}
\def\ch{{\rm C}}
\def\one{\mathbf{1\kern-1.6mm 1}}
\def\C{{\operatorname{C}}}
\def\c{{\operatorname{c}}}
\def\h2{{\operatorname{H_2}}}
\def\h1{{\operatorname{H_1}}}
\def\hb{{\rm H}_{\rm b}}
\def\cb{{\rm C}_{\rm b}}
\def\ehb{{\rm EH}_{\rm b}}
\def\h{{\rm H}}
\renewcommand{\phi}{\varphi}
\def\No{N\raise4pt\hbox{\tiny o}\kern+.2em}
\def\no{n\raise4pt\hbox{\tiny o}\kern+.2em}
\newcommand{\Ga}{\Gamma}
\newcommand{\ga}{\gamma}
\newcommand{\R}{\textup{Hom}}
\newcommand{\Ker}{\textup{Ker}}
\newcommand{\Imm}{\textup{Im}}
\begin{document}

\title[Relative second bounded cohomology of free groups]
{Relative second bounded cohomology of free groups}

\author[C. Pagliantini]{Cristina Pagliantini}
\address{Department Mathematik, ETH Z\"urich, 
R\"amistrasse 101, CH-8092 Z\"urich, Switzerland}
\email{cristina.pagliantini@math.ethz.ch}

\author[P. Rolli]{Pascal Rolli}
\address{Department Mathematik, ETH Z\"urich, 
R\"amistrasse 101, CH-8092 Z\"urich, Switzerland}
\email{pascal.rolli@math.ethz.ch}


\keywords{Relative bounded cohomology, quasimorphisms, 
free groups, split quasimorphisms, Schreier graphs}

\subjclass[2010]{}


\begin{abstract} 
This paper is devoted to the computation of the space $\hb^2(\Ga,H;\RR)$,
where $\Ga$ is a free group of finite rank $n\geq 2$ and $H$ 
is a subgroup of finite rank. 
More precisely we prove that  $H$ has infinite index in $\Ga$ if and only if
$\hb^2(\Ga,H;\RR)$ is not trivial, and furthermore,
if and only if
there is an isometric embedding 
$\oplus_\infty^n\mathcal{D}(\ZZ)\hookrightarrow \hb^2(\Ga,H;\RR)$, where $\mathcal{D}(\ZZ)$ is the space
of bounded alternating functions on $\ZZ$ equipped with the
defect norm.
\end{abstract}

\maketitle
%
%
%
%
\section{Introduction}\label{sec:intro}

The theory of \emph{bounded cohomology} of groups and spaces was introduced by
Gromov in his seminal paper ``Volume and bounded cohomology'' \cite{Gromov}.
Even if the definition of bounded cohomology is related to the one of singular cohomology, these two theories enjoy totally
different properties. For example, a remarkable
difference is that the bounded cohomology
of a space coincides with the bounded cohomology of its fundamental group \cite{Gromov}.
Another distinctive characteristic  is that it
vanishes in positive degree if the group is amenable
(or if the space has amenable fundamental group).
Later on, Ivanov provided an algebraic foundation of bounded cohomology of 
discrete groups by means of suitable resolutions \cite{Ivanov}, building
on earlier work of Brooks. Since then, computations 
in bounded cohomology have profited from the use of convenient
tools in homological algebra.
More recently, Burger and Monod have developed a functorial approach
to the continuous bounded cohomology of topological groups \cite{BM, BM2, Monod_book}.

\bigskip

For a long time the theory of \emph{quasimorphisms} has been extensively exploited
for studying the second bounded cohomology of a group.
A quasimorphism on a group $\Gamma$ is a map $f:\Gamma\rightarrow \RR$
such that 
$$\sup_{g,h\in \Gamma}|f(gh)-f(h)-f(g)|<\infty.$$
We denote by $\QM(\Ga)$  the $\RR$-vector space of quasimorphisms.
It turns out that the coboundary of a quasimorphism
is a bounded $2$-cocycle and therefore there is a 
linear map 
$$\QM(\Ga)\longrightarrow \hb^2(\Ga;\RR).$$
Moreover, the image of this map is the kernel of the comparison map
$\hb^2(\Ga,\RR)\rightarrow \h^2(\Ga;\RR)$
induced by the inclusion of bounded cochains into ordinary cochains 
(see sections \ref{sec:bounded cohomology} and \ref{sec:qm} for precise definitions and details).

The folklore example of a quasimorphism is Brooks' counting quasimorphism $C_w:\FF_2\rightarrow \RR$ on the rank two free group \cite{Brooks}, for which the parameter $w$ is a word in $\FF_2$. For $g\in \FF_2$ the value of $C_w(g)$ is given by counting  the number of subwords of $g$
equal to $w$, and subtracting the corresponding number for the inverse word
$w^{-1}$. This construction yields an infinite-dimensional subspace of $\hb^2(\FF_2;\RR)$ as was shown by Mitsumatsu \cite{Mit}.  Counting quasimorphisms have been widely generalized with the most far-reaching result being the work of Bestvina-Bromberg-Fujiwara~\cite{BBF} (see also Hull-Osin \cite{HullOsin}). For more background on quasimorphisms we refer the reader to the introduction in \cite{Rolli_thesis} and also to \cite{BM},\cite{Calegari}.

\bigskip

In the relative case Gromov provided a definition of bounded cohomology of 
pairs of topological spaces and pairs of groups. Ivanov's approach seems to run into several difficulties 
in the context of pairs of  spaces and groups. 
A first attempt to extend Ivanov's definition to the relative
case was given by Park \cite{park}.
Park's mapping cone
construction and Gromov's definition determine isomorphic cohomology theories.
However, Frigerio and Pagliantini \cite{FriPa}
showed that Park's semi-norm does not
coincide with the Gromov semi-norm in general. The Gromov semi-norm is the dual of the $\ell^1$-semi-norm
on singular homology that is in turn involved in the definition of simplicial
volume \cite{Loeh} which is one of the geometric applications of bounded cohomology. 
Using Gromov's definition Frigerio and Pagliantini \cite{FriPa}
 provided a (partial) relative version of Gromov's and Ivanov's 
results.

\bigskip

At the best of our knowledge, only few attempts have been made in detecting
the behavior of relative bounded cohomology and results are available only in very few cases. 
In the present paper we focus on the study of the second bounded cohomology 
of a free group of finite rank relative to a subgroup of finite rank.
We introduce relative quasimorphisms and we extend the usual relation between
bounded cohomology and quasimorphisms to the relative setting.
In particular, using the class of split quasimorphisms 
introduced by Rolli \cite{Rolli2},
we show the following characterization
which is the main result of our paper:
\begin{thm}\label{thm_main:intro}
Let $\Ga$ be  a free group of finite rank $n\geq 2$ and let $H<\Gamma$ be 
a subgroup of finite rank. The following are equivalent:
\begin{enumerate}
	\item $H$ has infinite index in $\Gamma$.
        \item The space $\hb^2(\Ga,H;\RR)$ is non-trivial.
	\item The space $\hb^2(\Ga,H;\RR)$ is infinite dimensional as vector space.
        \item There exists a linear isometric embedding 
          $$\oplus_{\infty}^n\mathcal{D}(\ZZ)\hookrightarrow \hb^2(\Ga,H;\RR).$$
\end{enumerate}
\end{thm}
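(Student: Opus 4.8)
The plan is to establish the cyclic chain $(1)\Rightarrow(4)\Rightarrow(3)\Rightarrow(2)\Rightarrow(1)$, so that the work concentrates in a single vanishing statement and a single construction. The implications $(4)\Rightarrow(3)$ and $(3)\Rightarrow(2)$ are immediate: the space $\mathcal{D}(\ZZ)$ of bounded alternating functions on $\ZZ$ is infinite dimensional (e.g.\ the functions supported on a single pair $\{k,-k\}$ are linearly independent), so a linear isometric embedding of $\oplus_{\infty}^{n}\mathcal{D}(\ZZ)$ forces $\hb^2(\Ga,H;\RR)$ to be infinite dimensional, and in particular nontrivial.

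Before the two substantial directions I would set up the bridge between relative bounded cohomology and quasimorphisms announced in the introduction. From the long exact sequence of the pair together with $\hb^1(H;\RR)=0$ one identifies $\hb^2(\Ga,H;\RR)$ with $\ker\bigl(\mathrm{res}\colon\hb^2(\Ga;\RR)\to\hb^2(H;\RR)\bigr)$; concretely a class is relative exactly when it is represented by a quasimorphism $f$ on $\Ga$ whose restriction $f|_H$ is at bounded distance from a homomorphism $H\to\RR$. I would also record how the defect of such an $f$ and the auxiliary bounded $1$-cochain on $H$ control the relative (Gromov) seminorm of the resulting class, since $(4)$ is a statement about an \emph{isometric} embedding. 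For $(2)\Rightarrow(1)$ I would then argue contrapositively: if $H$ has finite index, the transfer map $\mathrm{tr}\colon\hb^2(H;\RR)\to\hb^2(\Ga;\RR)$ satisfies $\mathrm{tr}\circ\mathrm{res}=[\Ga:H]\cdot\mathrm{id}$, so $\mathrm{res}$ is injective; by the identification above $\ker(\mathrm{res})=0$ and hence $\hb^2(\Ga,H;\RR)=0$, contradicting $(2)$.

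The heart of the argument is $(1)\Rightarrow(4)$, and this is where I expect the main obstacle. Rolli's split quasimorphisms provide an isometric embedding $\oplus_{\infty}^{n}\mathcal{D}(\ZZ)\hookrightarrow\hb^2(\Ga;\RR)$, one copy of $\mathcal{D}(\ZZ)$ per generator, but the naive construction does \emph{not} land in $\ker(\mathrm{res})$: already for $H=\langle a,\,bab^{-1}\rangle<\FF_2$ the $a$-syllables of elements of $H$ accumulate, and $f|_H$ is an unbounded, nontrivial quasimorphism on $H$. Since $H$ has finite rank and infinite index, its Stallings core graph is finite while the Schreier graph $\mathrm{Schreier}(\Ga,H)$ is infinite, so some vertex misses an incident edge and a generating direction escapes to infinity. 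I would use this to build split-type quasimorphisms $\Phi_\lambda$ that weight the $a_i$-syllables of a reduced word according to their position in $\mathrm{Schreier}(\Ga,H)$, arranged so that the contributions coming from closed loops at the basepoint — that is, from elements of $H$ — cancel up to a uniform bound.

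Three points then have to be verified: that each $\Phi_\lambda$ is a genuine quasimorphism on $\Ga$ whose defect still equals the $\oplus_\infty$-norm of $\lambda$; that $\Phi_\lambda|_H$ is uniformly bounded, so $\Phi_\lambda$ is a relative quasimorphism; and that $\lambda\mapsto[\delta\Phi_\lambda]$ is injective and isometric for the relative seminorm. The last point is the crux: the lower bound $\|[\delta\Phi_\lambda]\|_{\mathrm{rel}}\geq\|\lambda\|$ is free, since the forgetful map $\hb^2(\Ga,H;\RR)\to\hb^2(\Ga;\RR)$ is norm nonincreasing and restricts to Rolli's isometry, whereas the reverse bound demands an efficient relative representative $(\delta\Phi_\lambda,\beta)$ with $\delta\beta=\delta\Phi_\lambda|_H$ and both the defect and $\|\beta\|_\infty$ controlled by $\|\lambda\|$. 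Making the uniform bound on $\Phi_\lambda|_H$ quantitatively sharp while keeping the defect of $\Phi_\lambda$ on all of $\Ga$ exactly $\|\lambda\|$ — the tension between these two requirements, resolved using the infinite Schreier graph — is the hardest step.
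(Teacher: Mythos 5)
Your reductions $(4)\Rightarrow(3)\Rightarrow(2)$ and $(2)\Rightarrow(1)$ match the paper's: the last one is Proposition~\ref{rel-f-ind}, which is exactly the finite-index/transfer argument (isometric injectivity of restriction for finite-index subgroups, plus the long exact sequence). The gap is in $(1)\Rightarrow(4)$. You correctly observe that for a fixed basis the split quasimorphisms need not restrict boundedly to $H$, and you correctly identify the finiteness of the Stallings core versus the infiniteness of the Schreier graph as the resource to exploit; but your proposed fix --- keeping the basis and reweighting the syllable contributions according to position in the Schreier graph so that $\Phi_\lambda|_H$ is merely \emph{bounded} --- is not carried out, and as set up it cannot close. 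Once $\Phi_\lambda$ is no longer a genuine split quasimorphism, Rolli's theorem that $\|[d^1 f]\|_\infty=\deff f=\max_i\deff f_i$ (Theorem~\ref{defect norm}) no longer applies, so your ``free'' lower bound $\|[d^1\Phi_\lambda]\|\geq\|\lambda\|$ is not free: it would require $\Phi_\lambda$ to represent the same absolute class as the unweighted split quasimorphism, which you have not arranged. On the other side, a quasimorphism that is only bounded (rather than zero) on $H$ does not have $d^1\Phi_\lambda$ as a relative cocycle; it must first be corrected by a bounded cochain $\beta$ with the right restriction, which a priori inflates the relative seminorm by a multiple of $\|\beta\|_\infty$. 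You name this tension explicitly but do not resolve it, and resolving it is precisely the content of the theorem.

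The paper resolves it by moving the entire difficulty into the choice of basis, so that the quasimorphism vanishes on $H$ \emph{identically} rather than boundedly. Lemma~\ref{lem-rel-h2b} produces, by an induction on the finite number of core vertices lying on basic loops and using the Nielsen automorphisms $x_j\mapsto x_jx_i^{-k}$, a basis $\{x_1,\dots,x_n\}$ with $gHg^{-1}\cap\langle x_i\rangle=\{1\}$ for all $g$ and $i$; Lemma~\ref{lem-relative-2} then bounds by some $m_0$ the exponents of basis elements occurring as syllables of elements of $H$. Taking split quasimorphisms built from alternating functions supported on $m_0\ZZ\subset\ZZ$ (via Proposition~\ref{prop:B.3}) gives $f|_H=0$ on the nose, so $d^1 f$ is itself a relative cocycle and the isometry is the two-line sandwich $\deff f=\|i_b(\omega_f)\|_\infty\leq\|\omega_f\|_\infty\leq\|d^1 f\|_\infty=\deff f$. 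Without this basis change, or an equivalent device forcing exact vanishing on $H$, your step $(1)\Rightarrow(4)$ is incomplete.
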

We denote by $\mathcal{D}(\ZZ)$ the space of bounded alternating functions 
on $\ZZ$ equipped with the defect norm (see Section~\ref{sec:qm}).
This is a non-separable Banach space (see \cite[Corollary 7.2]{Rolli_thesis}).
Therefore, the conditions of Theorem~\ref{thm_main:intro} are further equivalent to 
$\hb^2(\Ga,H;\RR)$ being non-separable.

The crucial step in the proof is the construction of a 
suitable basis for $\Gamma$, namely  a basis that admits split 
quasimorphisms which vanish on the subgroup $H$.
Since this fact is of independent interest from bounded cohomology we formulate here the precise
statement:
\begin{lemma}\label{lem-rel-h2b:intro} 
Let $\Gamma$ be a free group of finite rank $n\geq2$ and let
 $H<\Gamma$ be a subgroup of finite rank and infinite index. 
There exists a basis $\{x_1,\dots,x_n\}$ of $\Gamma$ such that 
for all $g\in\Gamma$ and for all $i$ we have
\[
gHg^{-1}\cap\langle x_i\rangle=\{1\},
\]
which is to say that no conjugate of $H$ contains 
a power of an element of this basis.
\end{lemma}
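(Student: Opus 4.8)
The plan is to reformulate the statement in the language of Stallings' folded graphs and then to build the basis by hand. Fix a basis $\{a_1,\dots,a_n\}$ of $\Gamma$ and let $\Delta$ be the core graph of $H$: the finite (since $H$ has finite rank) folded graph, with oriented edges labelled by $a_1,\dots,a_n$, that carries the conjugacy classes of $H$. I will use the standard dictionary that an element $w$ with cyclic reduction $c$ is conjugate into $H$ if and only if some cyclic rotation of $c$ is the label of a closed reduced edge-path in $\Delta$. Under this dictionary the desired conclusion $gHg^{-1}\cap\langle x_i\rangle=\{1\}$ for all $g$ becomes: no cyclic rotation of any nonzero power of the cyclic reduction $c_i$ of $x_i$ labels a closed reduced path in $\Delta$. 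The hypothesis that $H$ has infinite index enters as the statement that $\Delta$ is \emph{not} a covering of the wedge of $n$ circles, i.e.\ some vertex of $\Delta$ misses an outgoing edge with some label $a_j^{\pm1}$.

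The heart of the argument, and the step I expect to be the main obstacle, is the construction of a single \emph{escaping word}: a reduced word $u$ such that for \emph{every} vertex $v$ of $\Delta$ the reduced path labelled $u$ and starting at $v$ is not entirely contained in $\Delta$. The difficulty is that escaping must be achieved from all vertices \emph{simultaneously} by one word, and a direct subset-automaton search can stall on a set of vertices that collectively supply every letter. I would instead argue greedily, exploiting that ``the path leaves $\Delta$'' is preserved when one appends further letters on the right. Because $[\Gamma:H]=\infty$, the covering space $X_H$ is infinite and connected, so from each individual vertex $v$ there is a reduced word $u_v$ whose path leaves $\Delta$. One then assembles $u$ by concatenation: keeping track of the set of vertices from which the current word already escapes, one picks a vertex $v$ not yet escaping, follows the current word inside $\Delta$ to its endpoint $w$, and appends $u_w$ (choosing its initial letter so as not to backtrack, which is possible since the unbased core graph has no valence-one vertices). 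Each round strictly enlarges the escaping set, so the process terminates with a word $u$ that escapes from every vertex.

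Granting $u$, I call an element \emph{good} if its cyclic reduction $c$ contains two disjoint cyclic occurrences of $u$, and I claim no nonzero power of a good element is conjugate into $H$. Indeed, every cyclic rotation $d$ of $c$ contains a full copy of $u$, because a single seam can break at most one of the two disjoint copies. Hence, writing $d=\alpha u\beta$, reading $d$ from any vertex of $\Delta$ must leave $\Delta$ as soon as it reaches the embedded copy of $u$; in particular no rotation of $c^{\,k}$ can label a closed path in $\Delta$ for $k>0$, since such a path would begin by reading a rotation $d$ entirely inside $\Delta$, a contradiction. The case $k<0$ follows by reversing the path. By the dictionary of the first paragraph, $g t g^{-1}\notin\langle\,\cdot\,\rangle$ fails, that is $gHg^{-1}\cap\langle t\rangle=\{1\}$ for every good element $t$ and every $g$.

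It then remains to exhibit a basis of $\Gamma$ all of whose members are good, which is routine given the flexibility of $u$. It is standard that $F_n$ with $n\ge 2$ contains a primitive element whose cyclic reduction displays two disjoint copies of any prescribed reduced word (for instance $u\,a_j$ when $u$ omits the generator $a_j$, the general case following from the transitivity of $\Aut(\Gamma)$ on primitive elements). Fixing one such good primitive element as $x_1$ and extending it to a basis $\{x_1,y_2,\dots,y_n\}$, I would replace each remaining member by $x_i:=y_i\,x_1^{M}$ for a large fixed $M$. These are Nielsen transformations, so $\{x_1,\dots,x_n\}$ is again a basis, and for $M$ large the cyclic reduction of $y_i x_1^{M}$ contains many copies of the cyclic reduction of $x_1$, hence at least two disjoint copies of $u$. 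Thus every $x_i$ is good, and by the previous paragraph the basis $\{x_1,\dots,x_n\}$ satisfies $gHg^{-1}\cap\langle x_i\rangle=\{1\}$ for all $g\in\Gamma$ and all $i$, as required.
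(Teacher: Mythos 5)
Your overall strategy is genuinely different from the paper's (which twists $H$ by a sequence of automorphisms $\varphi_{i,k}$ to strictly decrease the number of core vertices lying on basis-element loops), and your first two steps are essentially sound: a simultaneous escaping word $u$ does exist, and an element whose cyclic reduction forces every rotation of every power to read a full copy of $u$ inside the core cannot have a power conjugate into $H$. But the final step --- producing a \emph{basis} of such elements --- has a genuine gap, and it is exactly where the real difficulty of the lemma lives. First, your own example does not meet your own criterion: $u a_j$ contains one copy of $u$, not two disjoint cyclic copies (this particular point is repairable, since one cyclic occurrence suffices --- if $c^k$ labelled a closed core path one could traverse it twice and find an unbroken $u$ in $c^{2k}$). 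The unrepaired problem is the claim that $F_n$ contains a primitive element whose cyclic reduction contains an arbitrary prescribed reduced word. For $n=2$ this is false: by the classification of primitive elements of $F_2$ (conjugacy classes of primitives biject with primitive vectors of $\ZZ^2$, with Christoffel-type cyclically reduced representatives), every cyclically reduced primitive word has all its $a$-exponents of one sign and all its $b$-exponents of one sign, so no primitive element can contain an escaping word $u$ with mixed signs --- and nothing in your construction lets you choose $u$ single-signed; whether a single-signed escaping word exists depends on the core graph and is a nontrivial synchronization problem. The appeal to transitivity of $\Aut(\Gamma)$ on primitive elements does not rescue this: applying an automorphism changes the basis, hence the Schreier graph $\Delta$, hence which words are escaping, so you cannot reduce to the case where $u$ omits a generator. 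This is precisely the point at which the paper resorts to its automorphism induction rather than a direct construction.

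A secondary, fixable gap sits in the escaping-word construction itself. To keep the concatenation $u^{(t)}u_w$ reduced you need, at each vertex $w$ reached inside $\Delta$, an escaping word whose first letter avoids one prescribed backtracking letter; you justify this by the absence of valence-one vertices in the core, but that only guarantees \emph{some} non-backtracking continuation, not one that eventually exits $\Delta$. The statement is true, but the proof needs the infinite-index hypothesis in earnest: if every non-backtracking path from $w$ with non-forbidden first letter stayed in $\Delta$, the set of directed edges reachable by such paths would be closed under non-backtracking continuation and all their terminal vertices would have full valence $2n$; connectivity of $\Delta$ then forces every vertex to have full valence, making $\Delta$ a finite cover of the rose and $H$ of finite index. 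You should either supply this argument or restructure the induction to avoid prescribing the first letter.
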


\medskip

From \cite[Theorem A.1 and Proposition 6.1]{FriPoSi} it follows that the second bounded cohomology of a free group of finite rank $n\geq 2$ relative to a 
 malnormal subgroup of finite rank is infinite dimensional. Indeed, let $\Ga$ be a free group 
of rank $n\geq 2$ and $H$ be a malnormal subgroup of finite rank.
By a result of Bowditch \cite{Bo}  a malnormal quasi convex subgroup of a hyperbolic group  is hyperbolically embedded (see \cite[Definition~2.1]{DGO})
into the hyperbolic group relative to a finite set of generators.
Since all subgroups of a free group are quasi convex, $H$ is hyperbolically embedded in $\Gamma$. Therefore Theorem A.1  in \cite{FriPoSi} implies that
there exists a free group on two generators $\FF_2$ such that $H\cup \FF_2$ is
hyperbolically embedded in $\Gamma$. 
The restriction defines the maps in bounded cohomology 
$$\widehat{\eta}:\ehb^2(\Gamma;\RR)\longrightarrow \ehb^2(H;\RR)$$
and
$$\eta:\ehb^2(\Gamma;\RR)\longrightarrow \ehb^2(H;\RR)\oplus \ehb^2(\FF_2;\RR)$$
where $\ehb^2(\Gamma;\RR)$ is the kernel of the comparison map
$\hb^2(\Gamma;\RR)\rightarrow \h^2(\Gamma;\RR)$
between bounded cohomology and ordinary cohomology (defined in Subsection~\ref{sub:comparison}).
These maps fit in the following commutative diagram
   \begin{center}
      \begin{tikzpicture}
        \def\x{4.5}
        \def\y{-1.2}
        \node (A0_1) at (0*\x, 1*\y) {$\ehb^2(\Gamma;\RR)$};
        \node (A1_1) at (1*\x, 1*\y) {$ \ehb^2(H;\RR)$};
        \node (A1_0) at (1*\x, 0*\y) {$\ehb^2(H;\RR)\oplus \ehb^2(\FF_2;\RR)$};
        \path (A0_1) edge [->] node [auto] {$\eta$} (A1_0);
        \path (A1_0) edge [->] node [auto] {} (A1_1);
        \path (A0_1) edge [->] node [auto] {$\widehat{\eta}$} (A1_1);
      \end{tikzpicture}
    \end{center}
where the vertical arrow is the projection onto the first summand. 
Since $\ehb^2(\FF_2;\RR)$ is infinite dimensional \cite{Brooks,Mit}, 
and the map $\eta$ is surjective by \cite[Proposition 6.1]{FriPoSi}, 
the kernel
of the restriction map $\widehat{\eta}$ is 
infinite dimensional.
Now the conclusion follows by looking at the following segment of exact sequence
$$
0\longrightarrow \hb^2(\Ga,H;\RR)\stackrel{i_b}{\longrightarrow} 
\hb^2(\Ga;\RR) \longrightarrow \hb^2(H;\RR).
$$

Our result generalizes this fact for all (possibly not malnormal) finitely generated subgroups of infinite index 
and moreover provides a complete characterization of which subgroups lead to a non trivial
relative second bounded cohomology.

\subsection{Structure of the paper}
Section \ref{sec:bounded cohomology} is devoted to recalling the definition of relative bounded cohomology,
and in particular to proving some basic facts about relative second bounded cohomology.
In the third section we introduce the notion of (relative) quasimorphisms
showing the relation between (relative) quasimorphisms and
(relative) second bounded cohomology. 
Moreover, we present the special class of split quasimorphisms. Finally,
Section \ref{sec:thm_main} is devoted
to the proof of our main result Theorem~\ref{thm_main:intro} and in particular of
Lemma~\ref{lem-rel-h2b:intro}.

\begin{ack}
We would like to thank Alessandro Sisto for pointing out how the results in \cite{FriPoSi} imply a weak version
of our main theorem.
Both authors were supported by Swiss National Science Foundation project 144373, moreover the second author received support by project 127016.
\end{ack}

\section{Relative bounded cohomology}~\label{sec:bounded cohomology}
This section is devoted to the introduction of definitions and results on
bounded cohomology of a group
and a pair of groups. We refer the reader to
\cite{Gromov,Ivanov, Monod_book} for full details on
bounded cohomology of a group
and to \cite{FriPa, Pa_thesis} for the relative case.
 
Let $\Gamma$ be a discrete group. Recall that
a Banach space $V$ equipped with a linear isometric action of the group $\Ga$
is called a \emph{Banach $\Ga$-module}.

The group cohomology $\h^\ast(\Ga;\RR)$ is computed
by the bar complex $(\ch^n(\Ga), d^n)$ 
$$
0\longrightarrow \RR\tto{d^0} \ch^1(\Ga)\tto{d^1} \ch^2(\Ga)\tto{d^2}\ldots
\tto{d^{n-1}} \ch^n(\Ga)\tto{d^{n}}\ldots
$$
where $\ch^n(\Ga)$ is the space of real maps
on $\Ga^n$
with coboundary operator
$d^0(t)(\ga)=t$ for every $\gamma\in \Ga$ and $t \in \RR$, and
\begin{equation}\label{eq:coboundary}
\begin{array}{rl}
d^n(f)(\ga_1,\ldots,\ga_{n+1})=&
f(\ga_2,\ldots,\ga_{n+1})\\
&+ \sum_{i=1}^{n} (-1)^{i}
f(\ga_1,\ldots,\ga_i\ga_{i+1},\ldots,\ga_{n+1})\\
&+(-1)^{n+1}f(\ga_1,\ldots, \ga_{n})
\end{array}
\end{equation}
for every $n\geq 1$, $f\in \ch^n(\Ga)$, $(\gamma_1,\dots,\gamma_{n+1})\in \Ga^{n+1}$.

For every $f \in \ch^n(\Gamma)$ we set
$$\|f\|_\infty=\sup\{|f(\ga_1,\dots,\ga_n)|\,|\,(\ga_1,\dots,\ga_n)\in\Gamma^{n}\}
\,\in[0,\infty].$$
We denote by $(\cb^n(\Ga),d^n)$ the subcomplex of bounded maps, and 
we turn $\cb^n(\Ga)$ into a Banach $\Ga$-module by the action $\ga\cdot 
f(\ga_1,\dots,\ga_{n})=f(\ga_1,\dots,\ga_{n}\ga)$. 

The \emph{bounded cohomology} module $\hb^n(\Ga;\RR)$ of $\Ga$ is the cohomology of the complex $(\cb^n(\Ga),d^n)$ if $n>0$, otherwise 
$\hb^0(\Ga;\RR)\cong \RR$.
The supremum norm $\|\cdot\|_\infty$ induces a semi-norm on bounded cohomology 
that is usually referred to as
\emph{Gromov semi-norm} and it is still denoted by $\|\cdot\|_\infty$:
$$
\|\beta\|_\infty\colon =\inf_{\xi}\{\|\xi\|_\infty\, | \, \xi\in \cb^n(\Ga), \, d^n\xi=0 , \, [\xi]=\beta \}\qquad 
\text{for}\, \beta \in \hb^n(\Ga;\RR).
$$

\medskip 

Let now $H$ be a subgroup of $\Ga$. As before we consider the $H$-complex
$(\cb^\ast(H),d^\ast)$ which computes the bounded cohomology of $H$. 
The kernel of the
obvious restriction map $\cb^\ast(\Ga)\rightarrow \cb^\ast(H)$ is denoted by 
$\cb^\ast(\Ga,H)$, and we have the short exact sequence of complexes
$$0\longrightarrow \cb^\ast(\Ga,H)\longrightarrow \cb^\ast(\Ga)
\longrightarrow \cb^\ast(H)\longrightarrow 0,
$$
which induces  a long exact sequence in cohomology
$$
\dots\longrightarrow \hb^{n-1}(H;\RR)\longrightarrow \h^n(\cb^\ast(\Ga,H)) \longrightarrow \hb^n(\Ga;\RR)\longrightarrow 
\hb^n(H;\RR)\longrightarrow \dots
$$
The module $\h^n(\cb^\ast(\Ga,H))$ is the $n$-th bounded cohomology 
 of the pair
$(\Ga,H)$ denoted as $\hb^n(\Ga,H;\RR)$.
As in the absolute case the supremum norm on $(\cb^\ast(\Ga,H),d^\ast)$ 
induces a semi-norm on relative bounded cohomology.

\begin{prop}\label{vanishing}
For $(\Ga,H)$ any pair of groups,  we have: 
$$\hb^1(\Ga;\RR)=0 \quad \text{and}
\quad \hb^1(\Ga,H;\RR)=0.$$
\end{prop}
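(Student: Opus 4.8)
The plan is to prove both vanishing statements by exploiting that bounded $1$-cocycles are severely constrained. First I would handle the absolute case $\hb^1(\Ga;\RR)=0$. A $1$-cochain is simply a bounded function $f\colon\Ga\to\RR$, and the cocycle condition $d^1f=0$ unwinds via \eqref{eq:coboundary} to $f(\ga_1\ga_2)=f(\ga_1)+f(\ga_2)$ for all $\ga_1,\ga_2$; that is, $f$ is a homomorphism $\Ga\to\RR$. But the only bounded homomorphism into $\RR$ is the zero map: if $f(\ga)\neq 0$ for some $\ga$, then $|f(\ga^k)|=|k||f(\ga)|\to\infty$, contradicting boundedness. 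Hence every bounded $1$-cocycle is trivial, so $\hb^1(\Ga;\RR)=0$ (there being no nonzero coboundaries either, as $\cb^0$ is just the constants and $d^0$ of a constant is zero as a reduced cochain).

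For the relative case I would feed this into the long exact sequence of the pair displayed just above the proposition. The relevant segment reads
\[
\hb^0(H;\RR)\longrightarrow \hb^1(\Ga,H;\RR)\longrightarrow \hb^1(\Ga;\RR).
\]
Here I must be slightly careful about the low-degree end, since $\h^n(\cb^\ast(\Ga,H))$ for small $n$ needs the precise connecting-map bookkeeping; the honest route is to argue directly at the cochain level. A relative $1$-cocycle is a bounded function $f\colon\Ga\to\RR$ that (i) satisfies the cocycle equation, hence by the absolute argument is a homomorphism, and (ii) lies in $\cb^1(\Ga,H)$, i.e.\ restricts to $0$ on $H$. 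But we have already shown that a bounded homomorphism is identically $0$, so condition (i) alone forces $f\equiv 0$ regardless of (ii). Therefore there are no nonzero relative $1$-cocycles at all, and $\hb^1(\Ga,H;\RR)=0$ follows immediately.

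I expect the only genuine subtlety to be the degree-zero conventions: one must pin down what $\cb^0(\Ga,H)$ and the coboundary $d^0$ contribute, to be sure that no spurious nonzero coboundaries appear in $\cb^1(\Ga,H)$ and that the identification of relative $1$-cocycles with relative homomorphisms is exact. This is bookkeeping rather than a real obstacle: the key mathematical input, namely that a bounded real-valued homomorphism of any group vanishes, is elementary and already suffices to kill $1$-cocycles before the relative restriction condition even enters. For this reason I would present the relative statement as an immediate consequence of the absolute one, either via the long exact sequence (noting $\hb^1(\Ga;\RR)=0$ on the right and that the term on the left does not produce nonzero classes in degree one) or, more transparently, by the direct cochain computation above.
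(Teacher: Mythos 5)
Your proof is correct and follows essentially the same route as the paper: a bounded $1$-cocycle is a bounded homomorphism $\Ga\to\RR$, hence zero, and a relative bounded $1$-cocycle is in particular an absolute one. The extra care you take with the degree-zero conventions and the long exact sequence is harmless but not needed; the paper's proof is exactly your direct cochain argument.
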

\begin{proof}
A bounded $1$-cocycle on $\Ga$ is a bounded 
homomorphism from $\Ga$ to $\RR$, so it is clear that there are no 
bounded $1$-cocycles. Moreover, a relative bounded $1$-cocycle is in particular an absolute bounded $1$-cocycle.
\end{proof}

\subsection{Dimension $2$} 
In the absolute case the semi-norm induced
on the second bounded cohomology is actually a real norm \cite{Ivanov2,MatMo}. 
In the relative case again we have an actual norm on cohomology:

\begin{prop}\label{embedding}
Let $(\Ga,H)$ be a pair of groups. We have:
\begin{enumerate}
\item The space $\hb^2(\Ga,H;\RR)$ is a Banach space.
\item The map
$$i_b:\hb^2(\Ga,H;\RR)\longrightarrow \hb^2(\Ga;\RR)$$
in the above long exact sequence is a norm 
non-increasing embedding.
\end{enumerate}
\end{prop}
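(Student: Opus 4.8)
The plan is to prove part~(2) first and then extract part~(1) from it, leaning on the one nontrivial input that is already available in the absolute setting: the Gromov semi-norm on $\hb^2(\Ga;\RR)$ is an honest norm \cite{Ivanov2,MatMo}. Throughout I write $Z^2\subseteq\cb^2(\Ga,H)$ for the relative bounded $2$-cocycles and $B^2\subseteq Z^2$ for the relative bounded $2$-coboundaries, so that $\hb^2(\Ga,H;\RR)=Z^2/B^2$ with the quotient semi-norm induced by $\|\cdot\|_\infty$.

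For part~(2) I would read injectivity of $i_b$ straight off the long exact sequence. The term immediately preceding $\hb^2(\Ga,H;\RR)\stackrel{i_b}{\to}\hb^2(\Ga;\RR)$ is $\hb^1(H;\RR)$, which vanishes by Proposition~\ref{vanishing}; hence the connecting homomorphism landing in $\hb^2(\Ga,H;\RR)$ is zero, and exactness forces $\ker i_b=0$. That $i_b$ is norm non-increasing I would verify by comparing the two defining infima. Under the inclusion $\cb^\ast(\Ga,H)\hookrightarrow\cb^\ast(\Ga)$ a relative cocycle $\xi$ representing a class $\beta$ becomes an absolute cocycle representing $i_b(\beta)$ of the same supremum norm. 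Since $\|i_b(\beta)\|_\infty$ is an infimum over all absolute cocycles cohomologous to $i_b(\beta)$ — a larger family than the relative representatives of $\beta$ — one gets $\|i_b(\beta)\|_\infty\le\|\beta\|_\infty$.

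Part~(1) then comes cheaply. First, the semi-norm is in fact a norm: if $\|\beta\|_\infty=0$ then $\|i_b(\beta)\|_\infty\le\|\beta\|_\infty=0$, so $i_b(\beta)=0$ because the absolute semi-norm is a norm \cite{Ivanov2,MatMo}, and the injectivity of $i_b$ from part~(2) yields $\beta=0$. To upgrade this to completeness I would invoke the standard fact that, for a cochain complex of Banach spaces, the quotient semi-norm on $Z^2/B^2$ is a norm exactly when $B^2$ is closed in $Z^2$. Now $\cb^2(\Ga,H)$ is the kernel of the bounded restriction $\cb^2(\Ga)\to\cb^2(H)$ and hence a Banach space, and $Z^2=\ker d^2$ is a closed subspace of it; once $B^2$ is known to be closed, $Z^2/B^2$ is a quotient of a Banach space by a closed subspace, therefore itself a Banach space.

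The only real content sits in part~(2): the vanishing of $\hb^1(H;\RR)$ is precisely what lets the injective map $i_b$ transport the norm property — and with it completeness — from the absolute case, where it is already established, to the relative one. The main point I would be careful about is the \emph{direction} of the norm comparison: the relative norm dominates the absolute one, not conversely, and this is exactly what makes the implication $\|\beta\|_\infty=0\Rightarrow i_b(\beta)=0$ run the right way. Everything else reduces to routine functional analysis on complexes of Banach spaces.
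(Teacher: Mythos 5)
Your proposal is correct and follows essentially the same route as the paper: injectivity of $i_b$ from the vanishing of $\hb^1(H;\RR)$ in the long exact sequence, the norm non-increasing property from the isometric inclusion of relative into absolute cochains, and then part~(1) deduced by transporting the norm property (and hence completeness) through $i_b$ from the Banach space $\hb^2(\Ga;\RR)$. You merely spell out the final completeness step (closedness of the coboundaries) that the paper leaves implicit.
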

\begin{proof}
The above long exact sequence contains the segment
$$
\hb^1(H;\RR) \longrightarrow \hb^2(\Ga,H;\RR)\stackrel{i_b}{\longrightarrow} 
\hb^2(\Ga;\RR).
$$
By Proposition \ref{vanishing} we have 
$\hb^1(H;\RR)=0$, so that $i_b$ is an embedding.
The inclusion map $\cb^2(\Ga,H)\hookrightarrow \cb^2(\Ga)$
on the cochains level is an isometry, 
and
therefore the induced map $i_b$ in cohomology is norm non-increasing. This proves
part (ii) of the proposition. Now since $i_b$ is a norm non-increasing embedding, and since $\hb^2(\Ga,\RR)$ is a Banach space, 
the semi-norm on $\hb^2(\Ga,H;\RR)$ is a norm. This
means that the latter space is Banach as well.
\end{proof}

\begin{prop}\label{rel-f-ind}
Let $(\Ga,H)$ be a pair of groups. If $H$
has finite index in $\Ga$ then
$\hb^2(\Ga,H;\RR)=0$.
\end{prop}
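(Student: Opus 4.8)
The plan is to reduce the vanishing to the injectivity of the restriction map in bounded cohomology, and then to prove that injectivity by a transfer (averaging) argument that exploits the finiteness of the index.

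First I would isolate from the long exact sequence of Section~\ref{sec:bounded cohomology} the segment
$$
\hb^1(H;\RR)\longrightarrow \hb^2(\Ga,H;\RR)\stackrel{i_b}{\longrightarrow}\hb^2(\Ga;\RR)\stackrel{r}{\longrightarrow}\hb^2(H;\RR),
$$
where $r$ denotes the restriction map. By Proposition~\ref{embedding} the map $i_b$ is injective, so exactness at $\hb^2(\Ga;\RR)$ yields $\hb^2(\Ga,H;\RR)\cong\Image(i_b)=\Ker(r)$. Thus the whole statement is equivalent to showing that $r$ is injective. This first reduction is immediate and uses only results already available in the excerpt.

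The core of the argument is then to produce a transfer map $t\colon\hb^2(H;\RR)\to\hb^2(\Ga;\RR)$ satisfying $t\circ r=[\Ga:H]\cdot\id$. Since $[\Ga:H]$ is a nonzero real number, this relation forces $r$ to be injective, hence $\Ker(r)=0$, and combined with the previous paragraph this gives $\hb^2(\Ga,H;\RR)=0$. To build $t$ at the cochain level I would fix representatives $\{g_1,\dots,g_m\}$ of the right cosets $Hg_i$, where $m=[\Ga:H]<\infty$; for each $\gamma\in\Ga$ and each $i$ there are a unique index $\sigma_\gamma(i)$ and a unique element $h_i(\gamma)\in H$ with $g_i\gamma=h_i(\gamma)\,g_{\sigma_\gamma(i)}$, and one uses these to average the pulled-back values of a cochain on $H$ over the $m$ cosets. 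The decisive use of finite index is precisely here: the averaging sum has finitely many terms, each bounded, so $t$ carries $\cb^\ast(H)$ into $\cb^\ast(\Ga)$, whereas for infinite index no such bounded transfer exists (consistently with the rest of the paper).

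The main obstacle I expect is bookkeeping rather than conceptual: one must verify that this cochain-level $t$ commutes with the coboundary operator $d^\ast$ of \eqref{eq:coboundary}, so that it descends to cohomology, and that the composite $t\circ r$ equals multiplication by $m$. Both are standard computations in the (continuous) bounded cohomology transfer formalism (cf.\ \cite{Monod_book}), the only new point being the boundedness estimate, which is automatic for a finite sum of bounded terms. Once these verifications are in place, injectivity of $r$ and hence the proposition follow.
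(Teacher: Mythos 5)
Your reduction is exactly the paper's: both isolate the segment $\hb^1(H;\RR)\to\hb^2(\Ga,H;\RR)\to\hb^2(\Ga;\RR)\to\hb^2(H;\RR)$ of the long exact sequence and conclude from $\hb^1(H;\RR)=0$ together with the injectivity of the restriction map $\hb^2(\Ga;\RR)\to\hb^2(H;\RR)$. The only difference is that the paper simply cites \cite[Proposition 8.6.2]{Monod_book} for that injectivity, whereas you re-prove it via the standard finite-index transfer; your sketch of that transfer is correct, so the argument goes through.
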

\begin{proof}
The natural map $\hb^2(\Ga;\RR)\longrightarrow \hb^2(H;\RR)$ is isometrically injective 
by \cite[Proposition 8.6.2]{Monod_book}.
Furthermore we have $\hb^1(H;\RR)=0$ by Proposition \ref{vanishing}. The conclusion follows as we have the following segment of the above long exact sequence 
$$
\hb^1(H;\RR) \longrightarrow \hb^2(\Ga,H;\RR)\stackrel{i_b}{\longrightarrow} 
\hb^2(\Ga;\RR) \longrightarrow \hb^2(H;\RR).
$$
\end{proof}

\subsection{Comparison map}~\label{sub:comparison}
There is a natural 
inclusion map $\c:\cb^n(\Ga)\hookrightarrow \C^n(\Ga)$
(resp.~$\c:\cb^n(\Ga,H)\hookrightarrow \C^n(\Ga,H)$)
which induces the so-called \emph{comparison map}
$$\c:\hb^n(\Ga;\RR)\rightarrow \h^n(\Ga;\RR)
$$ 
(resp.~$\c:\hb^n(\Ga,H;\RR)\rightarrow \h^n(\Ga,H;\RR)$).
This map is a priori neither injective nor surjective.

The behaviour of the comparison map is related to several geometric properties:
Gromov \cite{Gromov} showed that for the fundamental
group $\Ga$ of a closed manifold of negative curvature and $n\geq 2$
the map $\c:\hb^n(\Ga;\RR)\rightarrow \h^n(\Ga;\RR)$
is surjective. 
He also claimed the same surjectivity result for hyperbolic groups
\cite[8.3.T]{Gromov2}.
Using a different approach Mineyev extended this result for any coefficients:
\begin{thm}[{\cite[Theorem 11]{Min_GAFA}}]
Let $\Ga$ be a hyperbolic group and let $V$ be a Banach $\Ga$-module.
Then the map $\c:\hb^n(\Ga;V)\rightarrow \h^n(\Ga;V)$ is surjective 
for each $n\geq 2$. 
\end{thm}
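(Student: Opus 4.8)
The plan is to follow the classical \emph{straightening} philosophy that Gromov used for negatively curved manifolds, adapted to the coarse setting of a hyperbolic group. Surjectivity of $\c$ means precisely that every ordinary class $\alpha\in\h^n(\Ga;V)$, represented by some (possibly unbounded) cocycle $f$, is cohomologous to a \emph{bounded} cocycle. I would produce such a representative by precomposing $f$ with a $\Ga$-equivariant chain endomorphism $\mathrm{st}$ of the (simplicial or homogeneous bar) chain complex --- the ``straightening'' --- chosen to be chain homotopic to the identity. Then $f\circ\mathrm{st}$ is again a cocycle, it is cohomologous to $f$ via the dual of the homotopy and hence still maps to $\alpha$, and the entire problem collapses to arranging that $f\circ\mathrm{st}$ be bounded for \emph{every} cocycle $f$ simultaneously. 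Since $\mathrm{st}$ is a geometric construction on chains that does not see the coefficients, handling an arbitrary Banach $\Ga$-module $V$ comes for free once the real-coefficient construction is in place; this is why the theorem holds for all $V$ at once.

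Geometrically I would realize $\Ga$ on a Rips complex $X$, a $\delta$-hyperbolic geodesic space carrying a proper cocompact $\Ga$-action and computing both $\h^\ast(\Ga;V)$ and $\hb^\ast(\Ga;V)$. The engine of the straightening is an equivariant \emph{homological bicombing}: a $\Ga$-equivariant assignment $(a,b)\mapsto q(a,b)$ of a $1$-chain with $\partial q(a,b)=b-a$, which is quasi-geodesic and, crucially, satisfies a strong uniform fellow-traveling estimate. I expect the construction of such a bicombing to be the main obstacle: honest geodesics are neither equivariant nor well defined, and a naive combing fails the boundedness one needs, so the bicombing must be built by an averaging/convolution procedure (over the hyperbolic boundary) and then corrected, with the thin-triangles inequality supplying the quantitative control. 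Everything downstream is a formal consequence of this one estimate.

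Granting the bicombing, I would define the higher straightenings $\mathrm{st}_n$ inductively by coning: given an ordered tuple $v_0,\dots,v_n$, fill in the already-straightened boundary using $q$, producing a ``straight'' $n$-simplex canonically and equivariantly attached to its vertices. This yields the equivariant chain map $\mathrm{st}$ together with a coning homotopy to the identity. Dualizing produces $f\circ\mathrm{st}$ as above, and the boundedness of $f\circ\mathrm{st}$ is the geometric heart of the matter. Here I would use two facts: first, any equivariant cochain is \emph{automatically} bounded on configurations of uniformly bounded diameter, since cocompactness of the $\Ga$-action means such tuples fall into finitely many orbits; second, a straight geodesic $n$-simplex in a $\delta$-hyperbolic space is uniformly \emph{thin}, so iterating the fellow-traveling estimate up through dimension $n$ and invoking the cocycle identity reduces the value of $f$ on an arbitrary (possibly huge) straight simplex to its values on bounded-diameter data near the ``core'' of the configuration. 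This is the coarse analogue of the fact that geodesic simplices in $\HH^n$ have volume bounded by the ideal volume regardless of their size.

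Finally I would attend to the bookkeeping: that $\mathrm{st}$ and the homotopy are genuinely $\Ga$-equivariant, that the operator bounds are uniform in $n$ and independent of the coefficient module, and that the argument truly needs $n\geq2$ --- in degree $1$ the comparison map is typically not surjective, as there are no bounded homomorphisms to $\RR$, so the hypothesis $n\geq2$ is essential and is exactly where thinness first produces something. Once the bounded bicombing is secured, these remaining steps are routine homological algebra, and the surjectivity follows uniformly for every Banach $\Ga$-module $V$.
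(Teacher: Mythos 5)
This statement is not proved in the paper at all: it is quoted verbatim from Mineyev's article (\cite[Theorem 11]{Min_GAFA}) and used as a black box, so there is no in-paper argument to compare yours against. Measured against Mineyev's actual proof, your sketch reproduces the correct architecture --- realize $\Ga$ on a Rips complex, build an equivariant homological bicombing $q$ with $\partial q(a,b)=b-a$, define straightenings $\mathrm{st}_n$ by iterated coning, observe that equivariant cochains are automatically bounded on a $\Ga$-finite complex, and conclude that $f\circ\mathrm{st}$ is a bounded cocycle cohomologous to $f$; the degree restriction $n\geq 2$ enters exactly where you say it does.

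The genuine gap is that the one step you defer (``I expect the construction of such a bicombing to be the main obstacle'') \emph{is} the theorem. Everything else in your outline is routine homological algebra, and the entire content of Mineyev's paper is the construction of a $\Ga$-equivariant quasi-geodesic bicombing $q$ satisfying the uniform $\ell^1$ \emph{bounded-area} estimate $\|q(a,b)+q(b,c)-q(a,c)\|_1\leq T$ for a constant $T$ depending only on $\delta$ and the complex. This is obtained by spreading each combing chain over many near-geodesics with weights decaying exponentially along the path, and the verification of the $\ell^1$ estimate from thin triangles is delicate; nothing you have written certifies that such a $q$ exists. A second, smaller inaccuracy: the mechanism for boundedness of $f\circ\mathrm{st}$ in degrees $n\geq 2$ is not a reduction of $f$ ``to its values near a bounded-diameter core via the cocycle identity,'' but rather the fact that the straightened chains $\mathrm{st}_n(v_0,\dots,v_n)$ themselves have uniformly bounded $\ell^1$-norm once $n\geq 2$ (the degree-$2$ straightening fills the uniformly $\ell^1$-small cycle $q(v_0,v_1)+q(v_1,v_2)-q(v_0,v_2)$, and higher cones preserve the bound); combined with the finiteness of $\Ga$-orbits of simplices this gives $|f(\mathrm{st}_n(\sigma))|\leq \|f\|\cdot\|\mathrm{st}_n(\sigma)\|_1\leq C$. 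In degree $1$ the combing chains have norm comparable to $d(a,b)$, which is exactly why the argument, and the theorem, fail there. As written, your proposal is a correct road map with the bridge missing.
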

See \cite{Monod_book} for the definition of 
bounded cohomology with coefficients in a Banach $\Gamma$-module.

Later on, Mineyev \cite{Min} 
characterized  hyperbolic groups in terms of the
surjectivity of the comparison map. More precisely,
he showed that a finitely presented group $\Gamma$
is hyperbolic if and only if the comparison map
$\hb^2(\Gamma;V)\rightarrow \h^2(\Gamma;V)$ is
surjective for any Banach $\Gamma$-module $V$.

Due to the duality between the  supremum semi-norm in bounded cohomology and the $\ell^1$-semi-norm
in singular homology, 
the surjectivity of the previous comparison map implies the positivity of the \emph{simplicial volume}. More precisely,
let $M$ be a closed connected oriented manifold of dimension at least $2$ that is rationally essential (e.g.,~aspherical). If the fundamental group of $M$ is hyperbolic then the simplicial volume of $M$ is positive
(see \cite{Loeh} for a survey on simplicial volume).

Moreover, the injectivity of the comparison map is equivalent to 
the so-called \emph{uniform bounded
condition} introduced by Matsumoto and Morita \cite[Theorem 2.8]{MatMo}: 
a normed chain complex $(C, \partial ; \|\cdot\|)$ satisfies the uniform
bounded condition in degree $q$, for $q\in \NN$, if there is a constant $K\in \RR_{>0}$ such that for any
null-homologous $q$-cycle $z$ there exists  a $(q+1)$-chain $b$ with
$$\partial b=z\ , \qquad \|b\|\leq K\|z\|.
$$

In dimension two the behaviour of the comparison map is strictly related to
the notion of quasimorphisms. We extensively explain this relation in the following section.

\section{Quasimorphisms and relative quasimorphisms}\label{sec:qm}
In this section we recall the notion of quasimorphism on a group and its natural extension for a pair of groups. 
We investigate the relation between quasimorphisms and the kernel of the comparison map
between bounded cohomology and ordinary cohomology.

\subsection{Quasimorphisms}
Let $\Gamma$ be a discrete group.
\begin{defi}
A map $f:\Gamma\rightarrow \RR$ is called \emph{quasimorphism}
if there exists a constant $C>0$ such that
$$|f(gh)-f(g)-f(h)|<C, \qquad \forall g,h \in \Gamma.$$
We denote by $\QM(\Ga)$ the $\RR$-vector space of quasimorphisms on $\Ga$.
\end{defi}
\begin{defi}
The \emph{defect} of a quasimorphism $f$ is defined to be:
$$
\deff f \colon =\sup_{g,h\in \Ga} |f(gh)-f(g)-f(h)|.
$$
\end{defi}

\begin{defi}
A quasimorphism $f:\Ga\rightarrow \RR$ is called \emph{trivial}
if there exists a group homomorphism $\xi:\Ga\rightarrow \RR$ such that
$$\sup_{h\in\Ga}|f(h)-\xi(h)|<\infty.$$
\end{defi}
We denote the subspace of $\QM(\Ga)$ 
of all trivial quasimorphisms on $\Ga$ as
$\QM_0(\Ga)$, which is described by
$\cb^1(\Ga)\oplus\R(\Ga,\RR)$. 

The $1$-coboundary (as defined in \eqref{eq:coboundary}) of a quasimorphism $f$ 
is given by
$d^1f(g,h)=f(g)+f(h)-f(gh)$ so $d^1f$ is a bounded $2$-cocycle.
We denote by $\omega_f\colon=[d^1 f]_b$ the corresponding bounded
cohomology class, then we have a linear map
$$
\begin{array}{rcl}
\QM(\Ga)&\longrightarrow &\hb^2(\Ga;\mathbb{R})\\
f&\longmapsto & \omega_f
\end{array}
$$
such that the following sequence
$$\QM(\Ga)\longrightarrow \hb^2(\Ga;\mathbb{R})\stackrel{\c}{\longrightarrow} \h^2(\Ga;\mathbb{R})$$
is exact.
In particular, there exists a canonical isomorphism 
$$\ehb^2(\Gamma;\RR)\cong \QM(\Ga)/\QM_0(\Ga)$$
where $\ehb^2(\Gamma;\RR)$ denotes the kernel
of the comparison map in dimension two. 
The proof is straightforward, and in Proposition \ref{prop:qm_bc}
we will give details for the
analogous statement  for pairs of groups.
As an obvious consequence if $\h^2(\Ga;\RR)=0$ then $\hb^2(\Ga;\RR)\cong \QM(\Ga)/\QM_0(\Ga)$.

\subsection{Split quasimorphisms}
The second author introduced a class of quasimorphisms called \emph{split quasimorphisms} \cite{Rolli, Rolli2}. We recall this construction here.

Let $\Ga$ be a discrete group. We denote by
$$\QM_{\text{alt}}(\Ga)\colon =\{f \in \QM(\Ga)\, |\, f(g)+f(g^{-1})=0 \, \forall g \in \Gamma\}$$
the space of \emph{alternating} quasimorphisms.
Let $\Ga=A\ast B$ be a splitting of $\Ga$. 
If $f_A \in \QM_{\text{alt}}(A)$ and $f_B\in \QM_{\text{alt}}(B)$,
a split quasimorphism is a map
$$f=f_A\ast f_B:\Ga\longrightarrow \RR$$
defined as follows: let $g\in \Ga$ be a non-trivial 
element which has normal form $g=a_1b_1\dots a_nb_n$, 
where $a_i\in A$, $b_i \in B$ and only $a_1$ or $b_n$ are possibly trivial. We set
$$
(f_A\ast f_B)(g)\colon = f_A(a_1)+f_B(b_1)+\dots+f_A(a_n)
+f_B(b_n).
$$ 
Furthermore $(f_A\ast f_B)(1)\colon =0$.

The map $f=f_A\ast f_B$ is an alternating quasimorphism on $\Gamma$ with
$\deff f =\max\{\deff f_A, \deff f_B\}$.

\begin{thm}[{\cite[Theorem 14]{Rolli2}}]\label{defect norm}
Let $f=f_A\ast f_B$ be a split quasimorphism with 
corresponding bounded cohomology class $\omega_f=[d^1f]_b$.
We have
$$
\|\omega_f\|_\infty=\deff f=\max\{\deff f_A,\deff f_B \}.
$$
In particular, $f$ is a minimal defect representative for its class.
\end{thm}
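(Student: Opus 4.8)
The plan is to compute $\|\omega_f\|_\infty$ directly from the definition of the Gromov norm, reducing it to an infimum of defects, and then to treat the two inequalities separately; essentially all of the work lies in the lower bound.

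First I would record the standard reformulation. Two bounded $2$-cocycles represent $\omega_f$ precisely when they differ by $d^1$ of a bounded $1$-cochain, and for any map $g\colon\Gamma\to\RR$ one has $\|d^1 g\|_\infty=\deff g$. Hence
\[
\|\omega_f\|_\infty=\inf_{\phi\in\cb^1(\Gamma)}\deff(f+\phi),
\]
the infimum of the defect over all quasimorphisms at bounded distance from $f$. Taking $\phi=0$ and invoking the already-recorded identity $\deff f=\max\{\deff f_A,\deff f_B\}$ gives the upper bound $\|\omega_f\|_\infty\le\deff f$ immediately; the remaining ``minimal defect'' assertion is exactly the matching lower bound $\deff(f+\phi)\ge\deff f$ for every bounded $\phi$.

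For the lower bound I would first reduce to \emph{alternating} representatives. Since $f$ is alternating, replacing $\psi=f+\phi$ by its alternating part $\psi^{\mathrm{alt}}(x)=\tfrac12(\psi(x)-\psi(x^{-1}))$ leaves the cohomology class unchanged and does not increase the defect (indeed $d^1\psi^{\mathrm{alt}}(x,y)=\tfrac12(d^1\psi(x,y)-d^1\psi(y^{-1},x^{-1}))$), so it suffices to bound $\deff\psi$ from below for alternating $\psi$. Assuming without loss of generality $\deff f=\deff f_A$, fix a near-extremal pair $a_1,a_2\in A$ with $f_A(a_1)+f_A(a_2)-f_A(a_1a_2)\ge\deff f_A-\eps$, choose a spacer $b\in B$, set $u=a_2ba_1$, and build the chain $p_k=u^k$. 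The normal-form bookkeeping in $A\ast B$ shows that for $k\ge1$ the only interaction at each stage is the merger $a_1\cdot a_2$, so every junction satisfies $d^1 f(p_k,u)=f_A(a_1)+f_A(a_2)-f_A(a_1a_2)$, i.e. realizes the defect of $f_A$. Telescoping $\psi$ along the chain and using $|d^1\psi|\le\deff\psi$ then bounds $\deff\psi$ from below by the average junction value of $f$ minus the telescoped contribution of the bounded part $\phi$.

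The main obstacle is precisely this last contribution: the telescoped perturbation contains a term growing linearly in the chain length (a ``drift'' of size $\approx\phi(u)$), and one must show it cannot depress the average below $\deff f$. The plan to overcome it is to arrange that the sequence of appended blocks is asymptotically balanced under inversion --- equivalently, that the associated $\ell^1$ $2$-chains form an approximate cycle --- so that for every bounded alternating $\phi$ the telescoped contribution becomes sublinear and the bound converges to $\deff f$; dually, this amounts to producing a sequence of approximately closed $\ell^1$ $2$-cycles supported on extremal junctions whose pairing with $\omega_f$ tends to $\deff f$. I expect the genuine difficulty to be exactly this closing-up step, since forcing the chain to close while keeping essentially all of its junctions extremal is in tension with balancing the blocks, and it must be carried out using only the reduced-word combinatorics of $A\ast B$ together with the alternating symmetry. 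Once it is in place, combining with the upper bound yields $\|\omega_f\|_\infty=\deff f=\max\{\deff f_A,\deff f_B\}$, and in particular identifies $f$ as a minimal-defect representative of its class.
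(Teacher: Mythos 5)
You should first note that the paper does not actually prove this statement: it is imported verbatim from \cite[Theorem 14]{Rolli2}, so there is no in-paper argument to compare against and your proposal has to stand on its own. Its framework is correct as far as it goes: every bounded $2$-cocycle representing $\omega_f$ is of the form $d^1(f+\phi)$ with $\phi\in\cb^1(\Gamma)$, and $\|d^1g\|_\infty=\deff g$, so indeed $\|\omega_f\|_\infty=\inf_\phi\deff(f+\phi)$; the upper bound follows from $\phi=0$ together with $\deff f=\max\{\deff f_A,\deff f_B\}$; the reduction to alternating representatives is valid (your formula for $d^1\psi^{\mathrm{alt}}$ is correct and $\psi^{\mathrm{alt}}=f+\phi^{\mathrm{alt}}$ since $f$ is alternating); and the junction computation $d^1f(u^k,u)=f_A(a_1)+f_A(a_2)-f_A(a_1a_2)$ for $u=a_2ba_1$ is right.

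The problem is that the entire content of the theorem is the lower bound $\deff(f+\phi)\ge\deff f$, and your argument stops exactly where that begins. Telescoping along $p_k=u^k$ only yields $\deff\psi\ge\bigl(f_A(a_1)+f_A(a_2)-f_A(a_1a_2)\bigr)+\phi(u)+O(1/N)$, and nothing controls the sign of $\phi(u)$. The remedy you sketch, balancing the appended blocks under inversion, is not a routine fix but the actual crux, and it fails in its naive form: descending with $u^{-1}$ produces junctions $d^1f(u^k,u^{-1})=f_A(a_1a_2)-f_A(a_1)-f_A(a_2)$, the \emph{negative} of the extremal value (by alternation of $f_A$), so to keep the pairing with $d^1f$ large you must weight these terms by $-1$, after which the drift contributions $\phi(u)$ and $-\phi(u^{-1})=\phi(u)$ add rather than cancel. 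Worse, no chain supported on the cyclic subgroup $\langle u\rangle$ can possibly work: setting $D'=f_A(a_1)+f_A(a_2)-f_A(a_1a_2)$, the bounded alternating assignment $\phi(u^k)=-\sign(k)\,D'$ on $\langle u\rangle$ makes $d^1(f+\phi)(u^n,u^m)=0$ for \emph{all} $n,m\in\ZZ$, as a short case check confirms. (Such a $\phi$ cannot be extended to all of $\Gamma$ without creating large defect elsewhere, which is precisely why the theorem is true, but it shows that any proof must pair $d^1\psi$ against words that genuinely exploit the free-product combinatorics outside a single cyclic subgroup.) As written, your proposal establishes only the inequality $\|\omega_f\|_\infty\le\deff f$, and the missing closing-up construction is where all of the work in \cite{Rolli2} actually lies.
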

Notice that $f=f_A\ast f_B$ is a minimal defect representative, even if $f_A$ and $f_B$ are not.
In particular, the class of $f_A\ast f_B$ depends on the exact representatives $f_A$ and $f_B$ and not just their classes.

The \emph{defect space} $\mathcal{D}(\Ga)$ of a group $\Ga$ is the space of 
bounded alternating functions $f:\Ga\rightarrow \mathbb{R}$, equipped with the defect 
$\|\cdot\|\colon=\deff \cdot$ as a norm. 
The defect space has the same underlying vector space as the Banach space $\ell^\infty_{\text{alt}}(\Ga)$ endowed with the supremum norm. It turns out that
the defect norm is equivalent to the supremum norm \cite[Corollary B.2]{Rolli2}. Therefore the space $\mathcal{D}(\Ga)$ is a Banach space, and it is non-separable when $\Ga$ has infinitely many elements of order different from $2$. Hence, in particular $\mathcal{D}(\mathbb{Z})$ is non-separable.
For a summary of the basic 
properties of defect spaces we refer to \cite[Appendix B]{Rolli2}, an extended study can be found in \cite{Rolli_thesis}.
Here, we just mention the following property that we will need
in the proof of Theorem~\ref{thm_main:intro}.
\begin{prop}[{\cite[Proposition B.4]{Rolli2}}]\label{prop:B.3}
For a monomorphism $i: H\rightarrow \Gamma$, the map
$$
s_i : \mathcal{D}(H)\longrightarrow \mathcal{D}(\Ga)\, , \qquad s_i(f)(g)=
\left\{
\begin{array}{ll}
f(g) & g=i(h)\\
0 & g\not\in i(H)
\end{array}
\right.
$$
is an isometric embedding.
\end{prop}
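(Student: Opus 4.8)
The plan is to verify directly that $s_i$ is a well-defined, linear, injective map into $\mathcal{D}(\Gamma)$, and then to check that it preserves the defect norm. Write $K=i(H)\leq\Gamma$ and $\tilde f=s_i(f)$. First I would confirm that $\tilde f\in\mathcal{D}(\Gamma)$: it is bounded with $\|\tilde f\|_\infty=\|f\|_\infty$, and it is alternating because $i$ is a homomorphism onto the subgroup $K$, so $g\in K$ if and only if $g^{-1}\in K$; when $g=i(h)$ we get $\tilde f(g)+\tilde f(g^{-1})=f(h)+f(h^{-1})=0$, and when $g\notin K$ both terms vanish. Since every bounded function is automatically a quasimorphism (its defect is at most $3\|f\|_\infty$), $\tilde f$ indeed represents an element of $\mathcal{D}(\Gamma)$. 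Linearity of $s_i$ is immediate, and injectivity follows since $s_i(f)=0$ forces $f=0$ on $H$.

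The core is the equality $\deff\tilde f=\deff f$. The inequality $\deff\tilde f\geq\deff f$ is the easy direction: restricting both arguments to $K$ and using that $i$ is an isomorphism onto $K$, the difference $\tilde f(g_1g_2)-\tilde f(g_1)-\tilde f(g_2)$ with $g_j=i(h_j)$ becomes exactly $f(h_1h_2)-f(h_1)-f(h_2)$, so taking the supremum recovers $\deff f$.

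For the reverse inequality I would first record the elementary but crucial fact that every bounded quasimorphism satisfies $\|f\|_\infty\leq\deff f$: applying the quasimorphism inequality to the pair $(g,g)$ gives $2|f(g)|\leq|f(g^2)|+\deff f\leq\|f\|_\infty+\deff f$, and taking the supremum over $g$ yields $2\|f\|_\infty\leq\|f\|_\infty+\deff f$. With this in hand I bound $|\tilde f(g_1g_2)-\tilde f(g_1)-\tilde f(g_2)|$ by a case analysis on how many of $g_1,g_2,g_1g_2$ lie in $K$. Because $K$ is a subgroup, any two of these three lying in $K$ forces the third into $K$ as well, so the number of them in $K$ is $0$, $1$, or $3$. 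When it is $0$ the difference vanishes; when it is $3$ it equals $f(h_1h_2)-f(h_1)-f(h_2)$, bounded by $\deff f$; and when it is exactly $1$ the difference collapses to a single value $\pm f(i^{-1}(\cdot))$, bounded by $\|f\|_\infty\leq\deff f$. In every case the bound is $\deff f$, giving $\deff\tilde f\leq\deff f$ and hence the isometry.

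The hard part will be the single-element case of the analysis: there the difference reduces to one function value, and one genuinely needs the inequality $\|f\|_\infty\leq\deff f$ to control it by the defect rather than by the a priori larger supremum norm. The combinatorial observation that the subgroup $K$ forbids exactly two of the three memberships is what eliminates the remaining cross terms and makes the other cases fall neatly into place.
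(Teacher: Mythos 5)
Your proof is correct and complete: the well-definedness checks, the lower bound by restriction to $i(H)$, and the upper bound via the trichotomy (the subgroup property forces $0$, $1$, or $3$ of $g_1,g_2,g_1g_2$ to lie in $i(H)$) together with the inequality $\|f\|_\infty\leq\deff f$ settle the isometry. The paper does not prove this proposition but only cites \cite[Proposition B.4]{Rolli2}, and the argument given there is essentially the one you describe, so there is nothing further to compare.
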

The defect norm on the bounded alternating functions allows to have the following isometric embedding:
\begin{thm}[{\cite[Theorem 17]{Rolli2}}]
For a group $\Ga=A\ast B$ there is a linear isometric
embedding 
$$
\mathcal{D}(A)\oplus_\infty \mathcal{D}(B)\hookrightarrow \hb^2(\Ga;\RR)$$
which maps the pair $(f_A,f_B)$ to the bounded
cohomology class $\omega_f$ of the split quasimorphism
$f=f_A\ast f_B$.
\end{thm}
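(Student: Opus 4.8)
The plan is to exhibit the stated map explicitly as
$$\Phi\colon \mathcal{D}(A)\oplus_\infty\mathcal{D}(B)\longrightarrow \hb^2(\Ga;\RR),\qquad \Phi(f_A,f_B)=\omega_{f_A\ast f_B},$$
and to prove it is a well-defined linear isometry; injectivity, and hence the embedding property, will then come for free since a linear isometry automatically has trivial kernel. First I would check well-definedness: as $f_A$ and $f_B$ are bounded they are in particular alternating quasimorphisms, so the split quasimorphism $f=f_A\ast f_B$ is an alternating quasimorphism on $\Ga$ and $d^1f$ is a bounded $2$-cocycle; thus $\omega_f=[d^1f]_b$ genuinely lies in $\hb^2(\Ga;\RR)$.

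The key algebraic point is linearity. Every nontrivial $g\in\Ga$ has a unique normal form $g=a_1b_1\cdots a_nb_n$, and the defining formula
$$(f_A\ast f_B)(g)=\sum_{i=1}^{n}\bigl(f_A(a_i)+f_B(b_i)\bigr)$$
depends linearly on the pair $(f_A,f_B)$ (with the empty-form convention giving the value $0$ at the identity). Hence $(f_A,f_B)\mapsto f_A\ast f_B$ is a linear map into the space of real maps on $\Ga$, and since both the coboundary $d^1$ and the passage to a bounded cohomology class $[\,\cdot\,]_b$ are linear, the composite $\Phi$ is linear.

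For the isometry I would invoke Theorem~\ref{defect norm}: for $f=f_A\ast f_B$ one has $\|\omega_f\|_\infty=\deff f=\max\{\deff f_A,\deff f_B\}$. Since the defect is precisely the norm on each factor $\mathcal{D}(\cdot)$ and $\oplus_\infty$ is the $\ell^\infty$-direct sum, the norm of $(f_A,f_B)$ in the domain is exactly $\max\{\deff f_A,\deff f_B\}$; comparing the two expressions gives $\|\Phi(f_A,f_B)\|_\infty=\|(f_A,f_B)\|$. Because the defect is a genuine norm (a bounded function of defect zero is the zero homomorphism), $\Phi(f_A,f_B)=0$ forces $\|(f_A,f_B)\|=0$ and hence $f_A=f_B=0$, so $\Phi$ is an isometric embedding.

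The main obstacle is entirely absorbed into Theorem~\ref{defect norm}, which supplies the norm computation; granting it, the steps above are routine. The genuinely hard content of that theorem is the lower bound $\|\omega_f\|_\infty\ge\max\{\deff f_A,\deff f_B\}$: one must check that no bounded perturbation (equivalently, no movement within the cohomology class) can lower the defect of $f$ below the maximum of the two defects. This is exactly where the special structure of split quasimorphisms is essential, since such a quasimorphism reads its values directly off the normal form, allowing one to probe the defect on elements supported purely in $A$ or purely in $B$ and thereby recover each $\deff f_A$, $\deff f_B$ as a genuine lower bound.
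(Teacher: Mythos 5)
Your proof is correct: the paper itself quotes this statement from \cite{Rolli2} without reproving it, and your derivation --- linearity of $(f_A,f_B)\mapsto f_A\ast f_B$ from uniqueness of normal forms, plus the norm identity $\|\omega_f\|_\infty=\deff f=\max\{\deff f_A,\deff f_B\}$ of Theorem~\ref{defect norm}, plus the observation that a bounded alternating function of defect zero is a bounded homomorphism and hence zero --- is exactly how the cited source obtains the embedding. You also correctly locate the only genuinely hard content in the lower bound of Theorem~\ref{defect norm}, which both the paper and your argument are entitled to take as given.
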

Here the notation $\oplus_\infty$ stands for the direct sum equipped with the max-norm.

\begin{cor}[{\cite[Corollary 19]{Rolli2}}]\label{epi}
If the group $\Gamma$ admits an epimorphism $\Ga\rightarrow \mathbb{F}_2$, then there is a
linear isometric embedding 
$$
\mathcal{D}(\ZZ)\oplus_\infty \mathcal{D}(\ZZ)\hookrightarrow \hb^2(\Ga;\RR).$$
\end{cor}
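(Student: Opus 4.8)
The plan is to realize the desired embedding as a composition of two linear isometric embeddings: the one provided by \cite[Theorem~17]{Rolli2} applied to the free splitting $\mathbb{F}_2=\ZZ\ast\ZZ$, followed by the pull-back in bounded cohomology induced by the epimorphism $\pi\colon\Ga\to\mathbb{F}_2$. Concretely, applying \cite[Theorem~17]{Rolli2} with $A=B=\ZZ$ yields a linear isometric embedding
$$
\mathcal{D}(\ZZ)\oplus_\infty\mathcal{D}(\ZZ)\hookrightarrow\hb^2(\mathbb{F}_2;\RR),
$$
so it suffices to show that the induced map $\pi^\ast\colon\hb^2(\mathbb{F}_2;\RR)\to\hb^2(\Ga;\RR)$ is itself a linear isometric embedding; composing the two maps then gives the corollary.

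First I would record the soft part: any group homomorphism induces a norm non-increasing map in bounded cohomology, since pulling back a bounded cochain along a homomorphism can only preserve or decrease the supremum norm, and this passes to cohomology. In particular $\pi^\ast$ is norm non-increasing. The point that upgrades this to an isometric embedding is the following key observation: because $\mathbb{F}_2$ is \emph{free}, the epimorphism $\pi$ splits. Indeed, choosing preimages in $\Ga$ under $\pi$ of a free basis $\{a,b\}$ of $\mathbb{F}_2$ and extending freely defines a group homomorphism $\sigma\colon\mathbb{F}_2\to\Ga$ with $\pi\circ\sigma=\id_{\mathbb{F}_2}$.

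With the section in hand, the argument is immediate. The section $\sigma$ induces $\sigma^\ast\colon\hb^2(\Ga;\RR)\to\hb^2(\mathbb{F}_2;\RR)$, which is again norm non-increasing, and by contravariant functoriality $\sigma^\ast\circ\pi^\ast=(\pi\circ\sigma)^\ast=\id$ on $\hb^2(\mathbb{F}_2;\RR)$. Hence for every class $\beta\in\hb^2(\mathbb{F}_2;\RR)$ one has
$$
\|\beta\|_\infty=\|\sigma^\ast\pi^\ast\beta\|_\infty\le\|\pi^\ast\beta\|_\infty\le\|\beta\|_\infty,
$$
forcing equality throughout. Thus $\pi^\ast$ is injective and norm-preserving, i.e.\ a linear isometric embedding, and composing it with the embedding from \cite[Theorem~17]{Rolli2} yields the claimed isometric embedding $\mathcal{D}(\ZZ)\oplus_\infty\mathcal{D}(\ZZ)\hookrightarrow\hb^2(\Ga;\RR)$.

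I expect the only genuinely delicate point to be recognizing that it is the splitting of $\pi$ which makes $\pi^\ast$ isometric rather than merely norm non-increasing: one is precisely exploiting freeness of the target, via the projectivity of free groups. (Alternatively one could invoke the general fact that surjections induce isometric embeddings in bounded cohomology, but the splitting argument above is elementary and self-contained.) The remaining verifications---functoriality and norm non-increase of pull-back, and the identification $\mathbb{F}_2=\ZZ\ast\ZZ$ needed to apply the preceding theorem---are routine.
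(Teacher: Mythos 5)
Your argument is correct: splitting the epimorphism $\pi\colon\Ga\to\mathbb{F}_2$ via a section $\sigma$ (using freeness of $\mathbb{F}_2$), deducing $\sigma^\ast\circ\pi^\ast=\id$ so that $\pi^\ast$ is an isometric embedding, and composing with the embedding of \cite[Theorem~17]{Rolli2} for $\mathbb{F}_2=\ZZ\ast\ZZ$ is exactly the intended route. The paper itself only cites \cite[Corollary~19]{Rolli2} without reproducing the proof, and your construction agrees with the standard one given there.
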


The construction of split quasimorphisms has an obvious generalization to the case of a free product with several factors, and so do the above results.

\subsection{Relative quasimorphisms}\label{sub:rel}
Let $(\Gamma,H)$ be a pair of discrete groups.
\begin{defi}
A \emph{relative quasimorphism} of the pair $(\Gamma,H)$ is
a quasimorphism on $\Gamma$ which vanishes on the subgroup $H$:
$$\QM(\Ga,H)=\{f\in \QM(\Ga)\, |\, {f_|}_H=0\}.$$
\end{defi}
If $\R(\Ga,H;\RR)$ is the space of homomorphisms on $\Ga$ that vanish on $H$, then the space
of trivial relative quasimorphisms is described as
$\QM_0(\Ga,H)= \cb^1(\Ga,H)\oplus \R(\Ga,H;\RR)$.

As in the absolute case, if $f\in \QM(\Ga,H)\subset \C^1(\Ga,H)$ 
then $d^1f$ is a relative bounded $2$-cocycle and $\omega_f:=[d^1f]_b$ is its relative bounded  cohomology class.
Therefore we have a linear map
$$
\begin{array}{rccl}
\varphi:&\QM(\Ga,H)&\longrightarrow &\hb^2(\Ga,H;\RR)\\
&f&\longmapsto& \omega_f.
\end{array}
$$
\begin{prop}\label{prop:qm_bc}
The sequence
$$
0\longrightarrow \QM_0(\Ga,H)\longrightarrow 
\QM(\Ga,H)\stackrel{\varphi}{\longrightarrow}\hb^2(\Ga,H;\RR)
\stackrel{c}{\longrightarrow}\h^2(\Ga,H;\RR) 
$$
is exact. In particular, there exists a canonical isomorphism 
$$\ehb^2(\Ga,H;\RR)\cong \frac{\QM(\Ga,H)}{\cb^1(\Ga,H)\oplus \R(\Ga,H;\RR)}$$
where  $\ehb^2(\Ga,H;\RR)$ is  the kernel of the relative comparison map in dimension two.
\end{prop}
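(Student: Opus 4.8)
The plan is to prove exactness directly at the level of (relative) cochains, in exact parallel with the absolute statement recalled above; the only new ingredient is the bookkeeping needed to keep every cochain inside the relative complex, i.e.\ vanishing on $H$. Exactness at $\QM_0(\Ga,H)$ is immediate, as the first map is simply the inclusion of a subspace. I would begin by recording why $\varphi$ is well defined: for $f\in\QM(\Ga,H)$ the cochain $d^1f$ is bounded with $\|d^1f\|_\infty=\deff f$, and since $f$ vanishes on $H$ one has $d^1f(h_1,h_2)=0$ for all $h_1,h_2\in H$, so $d^1f\in\cb^2(\Ga,H)$ is a genuine relative bounded cocycle and $\omega_f$ lives in $\hb^2(\Ga,H;\RR)$.

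For exactness at $\QM(\Ga,H)$ I would show $\ker\varphi=\QM_0(\Ga,H)$. If $\varphi(f)=0$, then $d^1f=d^1\beta$ for some $\beta\in\cb^1(\Ga,H)$; hence $f-\beta\in\C^1(\Ga,H)$ satisfies $d^1(f-\beta)=0$ and is therefore a homomorphism $\Ga\to\RR$ vanishing on $H$, that is, an element of $\R(\Ga,H;\RR)$. This writes $f=\beta+(f-\beta)\in\cb^1(\Ga,H)\oplus\R(\Ga,H;\RR)=\QM_0(\Ga,H)$. The reverse inclusion is the one-line computation $d^1(\beta+\xi)=d^1\beta$ for $\xi$ a homomorphism, which shows $\omega_{\beta+\xi}=0$.

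For exactness at $\hb^2(\Ga,H;\RR)$ I would prove $\im\varphi=\ker c$. The inclusion $\im\varphi\subseteq\ker c$ holds because $d^1f$ is by construction a coboundary in the (unbounded) relative complex $\C^\ast(\Ga,H)$, so its class in $\h^2(\Ga,H;\RR)$ is zero. Conversely, starting from a relative bounded cocycle $z$ whose class lies in $\ker c$, the vanishing of $c[z]$ in $\h^2(\Ga,H;\RR)$, computed by the same relative complex $\C^\ast(\Ga,H)$, furnishes a primitive $f\in\C^1(\Ga,H)$ with $d^1f=z$. As $z$ is bounded, $f$ has finite defect and is a quasimorphism, and it vanishes on $H$ by membership in $\C^1(\Ga,H)$; thus $f\in\QM(\Ga,H)$ and $\varphi(f)=[z]_b$. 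The asserted isomorphism then follows formally from exactness: $\ehb^2(\Ga,H;\RR)=\ker c=\im\varphi\cong\QM(\Ga,H)/\ker\varphi=\QM(\Ga,H)/\QM_0(\Ga,H)$.

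The argument is essentially routine diagram-chasing, and I do not expect a genuine obstacle. The one point that deserves attention --- the only place the relative setting could a priori behave differently from the absolute one --- is the last step: one must be sure that the primitive $f$ extracted from relative ordinary cohomology really lies in $\C^1(\Ga,H)$, i.e.\ vanishes on $H$. This is automatic once we use that $\h^2(\Ga,H;\RR)$ is by definition the cohomology of the relative complex $\C^\ast(\Ga,H)$, so the primitive is produced within that complex and no extra correction is needed.
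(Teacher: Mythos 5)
Your proposal is correct and follows essentially the same route as the paper's own proof: the kernel of $\varphi$ is identified with $\cb^1(\Ga,H)\oplus\R(\Ga,H;\RR)$ by subtracting a bounded relative primitive, and surjectivity onto $\Ker(c)$ is obtained by extracting a primitive $\xi\in\C^1(\Ga,H)$ of a bounded relative cocycle, which is automatically a quasimorphism vanishing on $H$. The extra remarks you include (well-definedness of $\varphi$ and the observation that the primitive produced in the relative ordinary complex already vanishes on $H$) are exactly the points the paper treats implicitly, so there is nothing to correct.
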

\begin{proof}
\emph{Exactness in $\QM(\Ga,H)$}. It is straightforward to see that
$\QM_0(\Ga,H)\subseteq \Ker(\varphi)$, and
on the other hand if $f\in \QM(\Ga,H)$ is such that $[d^1 f]_b=0$ 
then there exists $\alpha\in \cb^1(\Ga,H)$
such that $d^1 f=d^1 \alpha$. By definition of the coboundary map in \eqref{eq:coboundary}
the equation $d^1 (f-\alpha)=0$ implies that $f-\alpha$ is a homomorphism.
In particular, since both $f$ and $\alpha$ vanish on $H$ then $f-\alpha\in \R(\Ga,H;\RR)$.

\emph{Exactness in $\hb^2(\Ga,H;\RR)$}. Let  $\omega$ be a coclass in $\Ker(c)$. It 
is represented by $d^1\xi$ for some $\xi\in \ch^1(\Ga,H)$
such that $d^1\xi$ is bounded, so that $\xi\in \QM(\Ga,H)$.
Since $\QM(\Ga,H)\subset \C^1(\Gamma,H)$, the other inclusion $\Imm(\varphi)\subseteq\Ker(c)$ immediately follows.
\end{proof}

\begin{prop}\label{comparison}
If $i:\h^2(\Ga,H;\RR)\rightarrow \h^2(\Ga;\RR)$
is the map induced by the inclusion
of relative cochains into absolute cochains, we have the following isomorphism
$$
\Imm (c)\cap \Ker (i)
\cong\frac{\{f\in \QM(\Ga): {f_|}_{H} \in \R(H;\RR)\}}
{\QM(\Ga,H)\oplus \R(\Ga;\RR)}.
$$
In particular, if $\h^2(\Gamma;\RR)=0$
we can describe the image of the comparison map completely
in terms of relative quasimorphisms.
\end{prop}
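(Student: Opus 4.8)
The plan is to realise the claimed isomorphism by an explicit cochain-level map built out of quasimorphisms, in the spirit of Proposition~\ref{prop:qm_bc}. Write $N:=\{f\in\QM(\Ga): f|_H\in\R(H;\RR)\}$ for the numerator. The key observation is that for $f\in N$ the coboundary $d^1f$ is simultaneously bounded, because $f$ is a quasimorphism, and a \emph{relative} cochain, because $d^1f(h_1,h_2)=f(h_1)+f(h_2)-f(h_1h_2)$ vanishes for $h_1,h_2\in H$ exactly when $f|_H$ is a homomorphism. Hence $d^1f\in\cb^2(\Ga,H)$ is a relative bounded $2$-cocycle and the class $[d^1f]_b\in\hb^2(\Ga,H;\RR)$ is defined. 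I would then introduce the linear map
$$\Psi\colon N\longrightarrow \h^2(\Ga,H;\RR),\qquad \Psi(f)=c\big([d^1f]_b\big),$$
and show it descends to the stated isomorphism.

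First I would check that $\Psi$ lands in $\Imm(c)\cap\Ker(i)$. Membership in $\Imm(c)$ is immediate. For $\Ker(i)$, note that $\Psi(f)$ is represented by the relative cocycle $d^1f$, and $i$ is induced by regarding relative cochains as absolute ones; thus $i(\Psi(f))=[d^1f]\in\h^2(\Ga;\RR)$, which vanishes because $d^1f$ is the absolute coboundary of $f\in\ch^1(\Ga)$. (Equivalently $i\circ c=c_{\mathrm{abs}}\circ i_b$, and $i_b[d^1f]_b$ lies in the kernel of the absolute comparison map $c_{\mathrm{abs}}$.)

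Next, surjectivity. Given $\alpha\in\Imm(c)\cap\Ker(i)$, write $\alpha=c(\omega)$ with $\omega=[b]_b$ for some relative bounded $2$-cocycle $b\in\cb^2(\Ga,H)$; then $\alpha=[b]\in\h^2(\Ga,H;\RR)$. The hypothesis $i(\alpha)=0$ says that $b$, viewed absolutely, is an ordinary coboundary, $b=d^1f$ for some $f\in\ch^1(\Ga)$. Boundedness of $b=d^1f$ forces $f\in\QM(\Ga)$, and $b|_{H^2}=0$ forces $f|_H\in\R(H;\RR)$, so $f\in N$; since $b=d^1f$ as cochains, $\Psi(f)=c([d^1f]_b)=c(\omega)=\alpha$.

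Finally I would compute the kernel. We have $\Psi(f)=0$ iff $[d^1f]=0$ in $\h^2(\Ga,H;\RR)$, i.e. $d^1f=d^1\beta$ for some relative cochain $\beta\in\ch^1(\Ga,H)$ (so $\beta|_H=0$); then $f-\beta=:\rho\in\R(\Ga;\RR)$ while $\beta=f-\rho\in\QM(\Ga,H)$, whence $\Ker\Psi=\QM(\Ga,H)+\R(\Ga;\RR)$, the subspace in the denominator. Passing to the quotient gives the isomorphism, and the closing remark is then immediate: if $\h^2(\Ga;\RR)=0$ then $\Ker(i)$ is all of $\h^2(\Ga,H;\RR)$, so $\Imm(c)$ itself is described in terms of relative quasimorphisms. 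I expect the only delicate point to be the bookkeeping across the four complexes (bounded versus ordinary, relative versus absolute): the crux is that a single $2$-cochain $b=d^1f$ can be required to be bounded, relative, and an absolute coboundary at once, each condition translating into one property of $f$, while everything else is routine. One should also note that $\QM(\Ga,H)\cap\R(\Ga;\RR)=\R(\Ga,H;\RR)$ is generally nonzero, so the denominator is the internal sum of the two subspaces, which is what the notation $\oplus$ abbreviates here.
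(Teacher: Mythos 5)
Your proof is correct and is essentially the paper's argument: both realise the isomorphism by $f\mapsto[d^1f]$, both prove surjectivity by producing a representative of a class in $\Imm(c)\cap\Ker(i)$ that is simultaneously bounded, relative, and an absolute coboundary (each condition giving one property of $f$), and both compute the kernel in the same way; the only difference is that the paper reaches the representative $d^1F$ via the connecting map $\delta^1$ in the long exact sequence of the pair, whereas you unwind the cochain-level definitions directly. Your closing remark that $\QM(\Ga,H)\cap\R(\Ga;\RR)=\R(\Ga,H;\RR)$ can be nonzero, so the denominator is an internal rather than direct sum, is a fair observation about the notation and does not affect the result.
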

\begin{proof}
Let us consider the long exact sequence of the pair in ordinary cohomology
$$
\cdots\longrightarrow \h^1(\Ga;\RR)\stackrel{r}{\longrightarrow} \h^1(H;\RR) 
\stackrel{\delta^1}{\longrightarrow}  \h^2(\Ga,H;\RR)\stackrel{i}{\longrightarrow} 
\h^2(\Ga;\RR)\longrightarrow \cdots
$$
For any group $G$ we have $\h^1(G;\RR)=\R(G;\RR)$, then the map $r$ restricts a homomorphism
on $\Ga$ to a homomorphism on $H$. The map $\delta^1$ assigns 
to a homomorphism $f:H\rightarrow \RR$ the coclass $\beta=[d^1 F]$, 
where $F$ 
is any function on $\Ga$ that extends $f$.
We can see $\beta$ as the cohomological obstruction to the existence 
of a homomorphism on $\Ga$ which extends $f$.
Since we restrict to elements
 $\beta \in \Imm(c)\cap \Ker(i)$, up to adding up a coboundary
 we have $\|d^1F\|_\infty<\infty$ which means that $F$
 is a quasimorphism on $\Ga$. Therefore each coclass is represented
by the coboundary of a quasimorphism on $\Ga$ 
extending a homomorphism on the subgroup $H$ and
we have a surjective map:
$$
\begin{array}{rcl}
\{\xi\in \QM(\Ga): {\xi_|}_{H} \in \R(H;\RR)\}&\longrightarrow & 
\Imm(c)\cap \Ker(i)\\
F&\longmapsto&[d^1 F].
\end{array}
$$
It is straightforward to see that 
$\QM(\Ga,H)\oplus \R(\Ga;\RR)$ is contained in the kernel.
Moreover, if $[d^1 F]=0$ there exists $\alpha \in \ch^1(\Ga,H)$
such that $d^1 F=d^1 \alpha$ and, since $F\in \QM(\Ga)$ we have $\|d^1 \alpha\|_\infty<\infty$,
which means that $\alpha\in \QM(\Ga,H)$. 
Finally, by definition of the coboundary map in \eqref{eq:coboundary} 
the equation $d^1 (F-\alpha)=0$ implies that 
$F-\alpha\in \R(\Ga;\RR)$, which concludes the proof.
\end{proof}

\section{Proof of Theorem~\ref{thm_main:intro}}\label{sec:thm_main}
This section is devoted to the proof of Theorem~\ref{thm_main:intro}.
The key to the proof is Lemma \ref{lem-rel-h2b:intro} formulated in the introduction. We recall here the statement:
\begin{lemma}\label{lem-rel-h2b} 
Let $\Gamma$ be a free group of finite rank $n\geq2$ and let
 $H<\Gamma$ be a subgroup of finite rank and infinite index. 
There exists a basis $\{x_1,\dots,x_n\}$ of $\Gamma$ such that 
for all $g\in\Gamma$ and for all $i$ we have
\[
gHg^{-1}\cap\langle x_i\rangle=\{1\},
\]
which is to say that no conjugate of $H$ contains 
a power of an element of this basis.
\end{lemma}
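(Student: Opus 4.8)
The plan is to recast the conclusion in terms of the Stallings core graph of $H$, reduce the problem to producing a ``trap'' word, and then assemble a genuine basis out of trap-containing elements.

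\textbf{Reduction via the core graph.} For a fixed $x\in\Gamma$, the condition $gHg^{-1}\cap\langle x\rangle=\{1\}$ for all $g$ simply says that no nontrivial power $x^{k}$ is conjugate into $H$. I would encode this with the (finite, folded) Stallings core graph $\Delta$ of $H$, with vertex set $V$, $N=|V|$. Reading a reduced word determines a partial injection on $V$; write $\phi_{w}$ for the ``read $w$'' map, so that $\phi$ is a homomorphism of $\Gamma$ into the inverse monoid of partial injections of $V$, with $\phi_{w^{-1}}=\phi_w^{-1}$. The standard Stallings dictionary gives: $x^{k}$ is conjugate into $H$ iff the cyclically reduced word $\overline{x^{k}}$ labels a closed path in $\Delta$, and hence $x$ has \emph{some} power conjugate into $H$ iff $\phi_{\overline{x}}$ has a periodic point. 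Consequently it suffices to produce a basis $\{x_1,\dots,x_n\}$ with $\phi_{\overline{x_i}}=\varnothing$ for every $i$, since an empty read-map has no periodic point.

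\textbf{Constructing a trap.} Since $H$ has infinite index, $\Delta$ is not a complete cover of the rose, so some vertex $v_{\ast}$ misses an outgoing edge in some direction $\ell_{0}$. I would then build a reduced word $t$ with $\phi_{t}=\varnothing$ (a \emph{trap}: unreadable from every vertex) by killing live vertices one at a time. Starting from all of $V$, repeatedly route a surviving vertex to $v_{\ast}$ along a path inside $\Delta$ and append $\ell_{0}$; because $\Delta$ is folded each reading step is injective, so this strictly decreases the number of surviving start vertices, and after at most $N$ rounds none survive. The payoff is that if the cyclically reduced form $\overline{x}$ contains $t$ as a subword, then reading $\overline{x}$ from any vertex must attempt $t$ and escape, so $\phi_{\overline{x}}=\varnothing$ and $x$ is admissible by the previous paragraph.

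\textbf{Assembling the basis.} This is the delicate point, because admissibility is \emph{not} invariant under automorphisms of $\Gamma$ (the subgroup $H$ is fixed), so one may neither change basis freely nor correct the generators one at a time, as fixing $n-1$ of them can force the conjugacy class of the last. My plan is to arrange the trap $t$ so that, relative to the free decomposition $\Gamma=\langle a_{1},\dots,a_{n-1}\rangle\ast\langle a_{n}\rangle$ for the starting basis, some generator (say $a_{n}$, after relabelling) occurs exactly once in $t$. Then $\{a_{1},\dots,a_{n-1},t\}$ is again a basis, and applying the transvections $a_{i}\mapsto a_{i}t$ for $i<n$ (multiplication by the basis element $t$) yields the basis
\[
\{\,a_{1}t,\ \dots,\ a_{n-1}t,\ t\,\},
\]
each member of which has $t$ as a (cyclic) subword and is therefore admissible; by the first paragraph this basis satisfies the lemma.

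\textbf{The main obstacle.} The crux is guaranteeing a trap in which some generator appears exactly once. I would obtain it by running the killing procedure with one chosen generator used only a single time (for instance, choosing the deficient direction and the routing paths so that a fixed letter is spent once and the remaining vertices are killed using only the other letters together with the deficiency), and then, if necessary, appending a single missing generator at the end, using that $\phi_{t_{0}s}=\varnothing$ whenever $\phi_{t_{0}}=\varnothing$. Verifying that the finite rank and infinite index of $H$ always leave enough room to carry this out inside the required subgraph is the place where the argument must be made with care, and is where I expect the real work to lie.
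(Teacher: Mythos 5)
Your first two steps are sound: the translation of the condition into ``the cyclic reduction of $x$ induces a partial injection of the core vertices with no periodic point,'' and the existence of a reduced \emph{trap} word $t$ that is unreadable from every vertex of the finite core (using a deficient vertex, which exists precisely because $H$ has infinite index), both match standard Stallings-graph facts and can be made rigorous with routine bookkeeping about reducedness. The genuine gap is the step you yourself flag: you never establish that a trap exists \emph{in which some generator occurs exactly once}, and this is not a formality. The routing segments of your killing procedure are forced to follow edges of the core, and nothing prevents the core from being connected only through $a_n$-edges, or the deficient vertex from being reachable from the various survivors only by repeatedly crossing $a_n$-edges; so the plan ``spend the chosen letter once and kill everything else with the remaining letters'' has no argument behind it, and without it you have no primitive element and hence no basis. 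Even granting such a $t$, the final transvection step is not correct as stated: $a_it$ contains $t$ as a cyclic subword only if no cancellation occurs, i.e.\ only if $t$ neither begins nor ends with $a_i^{-1}$. For instance if $t=a^{-1}b^{-1}a^{-1}$ (a perfectly possible output of your procedure, with $b$ occurring once), then $at=b^{-1}a^{-1}$ no longer contains $t$, so admissibility of $at$ does not follow. These two issues sit exactly at the point where the lemma is hard, so the proposal is an incomplete strategy rather than a proof.

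For comparison, the paper avoids the primitivity problem entirely by never building a basis element by hand: it fixes the standard basis, considers the set $L(H)=\bigcup_i L_{x_i}(H)$ of core vertices lying on a loop labelled by a power of a basis element, and shows that if $L(H)\neq\varnothing$ then some $L_{x_i}(H)\subsetneq L(H)$ (again using a deficient vertex). Choosing $k$ so that $x_i^k$ fixes $L_{x_i}(H)$ pointwise and pushes every other core vertex out of the core, the Nielsen automorphism $x_j\mapsto x_jx_i^{-k}$ ($j\neq i$) strictly decreases $|L(\cdot)|$ when applied to $H$; iterating gives $\psi$ with $L(\psi(H))=\varnothing$, and the desired basis is $\{\psi^{-1}(x_1),\dots,\psi^{-1}(x_n)\}$, automatically a basis because it is the image of one under an automorphism. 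If you want to rescue your approach, the missing ingredient is a proof that a primitive trap always exists; otherwise I would recommend recasting your trap idea in the paper's ``apply an automorphism to $H$ and induct on a finite quantity'' framework.
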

In the proof of Lemma~\ref{lem-rel-h2b} we use the language of Schreier graphs, for which we fix the notation here. Let $\Gamma$ be a free group with a chosen basis $\{x_1,\dots,x_n\}$ and let $H<\Gamma$ be a subgroup. The Schreier graph $\mathcal{G}_H$ of the pair $(\Gamma,H)$ with respect to this basis has as its vertices the set of left cosets
\[
\ver(\G_H)=\{gH\,|\,g\in\Gamma\},
\]
and the edges are given by
\[
\mathrm{edges}(\G_H)=\{(gH,x_igH)\,|\,g\in\Gamma,1\leq i\leq n\}.
\]
Note that each edge is oriented and naturally labelled with a generator $x_i$, and that the graph $\G_H$ is $2n$-regular. Let $gH,g'H\in G/H$ and $x=x_{i_1}^{\epsilon_1}\cdots x_{i_l}^{\epsilon_l}\in\Gamma$, written as a word in the generators and their inverses. We have $xgH=g'H$ if and only if the edge-path $x_{i_1}^{\epsilon_1},\dots,x_{i_l}^{\epsilon_l}$ starting at the vertex $gH$ ends at the vertex $g'H$. For each letter $x_i^{-1}$ with a negative power this path is running in the direction opposite to the orientation of the corresponding edge. In particular we have $xgH=gH$ if and only if the path starting at $gH$ and corresponding to $x$ is a loop. We say that a vertex $gH$ is the basepoint of an $x$-loop if there is $n\geq1$ such that $x^ngH=gH$. We write
\[
L_x(H)\subset\ver(\G_H)
\]
for the set of vertices that are the basepoint of an $x$-loop. For an automorphism $\varphi\in\Aut(\Gamma)$ we have an induced bijection
\[
\varphi:\ver(\G_H)\longrightarrow\ver(\G_{\varphi(H)}),\quad gH\mapsto\varphi(g)\varphi(H)
\]
and by restricting this map we obtain for each $x\in\Gamma$ a bijection
\[
L_x(H)\longrightarrow L_{\varphi(x)}(\varphi(H)).
\]
The graph $\G_H$ can be identified with a covering graph of a wedge of $n$ loops, namely with the covering that corresponds to the given subgroup $H$. Furthermore, $\G_H$ can be seen as the quotient of the Cayley graph $\G_{\{1\}}$ with respect to the natural action of $H$. We denote by $\mathcal{C}_H$ the core of the graph $\G_H$. This is the subgraph that consists of the edges and vertices that are contained in a loop without backtracking. This means that
\begin{equation}\label{eq:core}
v\in\ver(\mathcal{C}_H)\quad\Longleftrightarrow\quad v\in L_x(H) \mbox{ for some cyclically reduced $x\in\Gamma$}
\end{equation}
and in fact $\ver(\mathcal{C}_H)=\bigcup L_x(H)$, where the union is over all cyclically reduced elements of $\Gamma$. Core graphs were introduced by Stallings in \cite{Stallings}, and this is also the reference for the following observations:
\begin{prop}\label{prop-core}
\begin{enumerate}
\item[\I] The graph $\mathcal{C}_H$ is finite, if and only if the subgroup $H$ has finite rank.
\item[\II] We have $\mathcal{C}_H=\G_H$, if and only if the subgroup $H$ has finite index in $\Gamma$.
\end{enumerate}
\end{prop}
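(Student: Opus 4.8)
The plan is to route everything through Stallings' theory of folded graphs, exploiting the covering map $p\colon\G_H\to R_n$ to the wedge $R_n$ of $n$ oriented loops, the basepoint $o=1\cdot H$, and the identification $p_*\pi_1(\G_H,o)=H$. Since $\G_H$ is a graph, $H$ is free and $\rk(H)$ equals the first Betti number $b_1(\G_H)$, i.e.\ the dimension of the cycle space of $\G_H$. The first thing I would record is that $\mathcal{C}_H$ carries the whole cycle space: the cycle space is spanned by embedded cycles, each embedded cycle is a loop without backtracking, and the edges of such a loop lie in $\mathcal{C}_H$ by definition, so the support of every cycle is contained in $\mathcal{C}_H$. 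Consequently $\mathcal{C}_H$ and $\G_H$ have the same cycle space and $b_1(\mathcal{C}_H)=b_1(\G_H)=\rk(H)$.

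For part \I the nontrivial implication is ``$\rk(H)<\infty\Rightarrow\mathcal{C}_H$ finite''. I would pick generators $h_1,\dots,h_r$ of $H$, write each as a reduced word in $x_1,\dots,x_n$, and form the based graph $W$ that is a wedge of $r$ petals, the $j$-th petal subdivided and labelled to spell $h_j$, together with the evident label-preserving map $W\to R_n$. I would then fold: while two edges at a common vertex carry the same label and orientation, identify them. Each fold removes an edge, so folding terminates in a finite folded graph $\Delta$ immersing into $R_n$ with $\pi_1$-image still equal to $H$. The standard fact is that, after deleting its finitely many hanging trees, $\Delta$ is realised as a finite subgraph of the cover $\G_H$, namely the based core. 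As the based core contains every reduced cycle, we get $\mathcal{C}_H\subseteq\Delta$, so $\mathcal{C}_H$ is finite. The converse is immediate from the first paragraph: if $\mathcal{C}_H$ is finite then $\rk(H)=b_1(\mathcal{C}_H)<\infty$.

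For part \II, the implication ``finite index $\Rightarrow\mathcal{C}_H=\G_H$'' rests on the observation that $\G_H$ is always $2n$-regular and has exactly $[\Gamma:H]$ vertices, hence is finite. A finite $2n$-regular graph has no bridges: if an edge separated off a vertex set $A$, counting degrees would give $2n\lvert A\rvert=2\lvert E(A)\rvert+1$, impossible modulo $2$. Therefore every edge lies on an embedded cycle; in a Schreier graph (where each vertex has a unique incoming and a unique outgoing edge of each label) an embedded cycle is non-backtracking and its label is cyclically reduced, so every edge lies in $\mathcal{C}_H$ and $\mathcal{C}_H=\G_H$. For the converse I would feed the hypothesis $\mathcal{C}_H=\G_H$ into part \I: once $H$ is known to have finite rank, part \I makes $\mathcal{C}_H$ finite, hence $\G_H$ finite, hence $[\Gamma:H]<\infty$.

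I expect the main obstacle to be the folding step of part \I: proving that folding terminates, that it leaves the image subgroup $H$ unchanged, and above all that the resulting finite folded graph is genuinely the core subgraph sitting inside the covering graph $\G_H$, rather than merely mapping to it. This is exactly Stallings' analysis in \cite{Stallings}, which I would either cite directly or reprove by checking that a based folded graph immersing into $R_n$ embeds into the cover determined by its $\pi_1$-image. A secondary point worth making explicit is that the converse of \II really does require the finite-rank hypothesis: without it the implication fails, as shown by $H=[\FF_2,\FF_2]$, whose coset graph is the integer square grid and therefore equals its own cyclic core while $H$ has infinite index.
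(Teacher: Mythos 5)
Your argument is correct, but there is little to compare it against: the paper does not prove this proposition at all, stating it as a pair of observations imported from Stallings and offering only the citation to \cite{Stallings} in its place. Your reconstruction follows precisely the route that citation points to: the identification $\rk(H)=b_1(\G_H)=b_1(\mathcal{C}_H)$ via the fact that all cycles are supported in the core, Stallings folding to realise the based core as a finite subgraph of the cover for the nontrivial direction of \I, and the parity/bridge argument on the finite $2n$-regular graph for the forward direction of \II. All of these steps are sound (the folding facts you defer to are exactly the content of \cite{Stallings}), so you have simply filled in what the authors left to the reference. The genuinely valuable part of your write-up is the closing remark: part \II\ as literally stated is false without a finite-rank hypothesis on $H$, since for $H=[\FF_2,\FF_2]<\FF_2$ the Schreier graph is the square grid, every edge of which lies on an embedded (hence non-backtracking) $4$-cycle labelled by the cyclically reduced word $x_1x_2x_1^{-1}x_2^{-1}$, so that $\mathcal{C}_H=\G_H$ while $[\Gamma:H]=\infty$. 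Your proof of the converse of \II\ correctly routes through \I\ and therefore genuinely uses $\rk(H)<\infty$; that hypothesis should be added to the statement. This is harmless for the rest of the paper, which only ever invokes the proposition for subgroups of finite rank (part \I\ in Lemma~\ref{lem-relative-2}, and the existence of vertices outside the core in the proof of Lemma~\ref{Claim I}), but the statement as displayed does not carry the assumption.
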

In order to prove Lemma~\ref{lem-rel-h2b} we 
fix a basis $\{x_1,\dots,x_n\}$ of $\Gamma$. For $1\leq i\leq n$ and $k\in\Z$ we define $\varphi_{i,k}\in\Aut(\Gamma)$ by
\begin{align*}
\varphi_{i,k}(x_i)&=x_i\\
\varphi_{i,k}(x_j)&=x_jx_i^k,\quad\mbox{for $i\neq j$}.
\end{align*}
We will show that for a suitable concatenation $\psi$ of such automorphisms we obtain a new basis $\{y_1,\dots, y_n\}$ of the desired type by setting $y_i: =\psi^{-1}(x_i)$. 
To this purpose we need three preliminary results.
Let $\mathcal{G}_{H}$ be the Schreier graph of $H$ with respect to the chosen basis and let $\mathcal{C}_{H}$ be its core. We define
\[
L(H):=\bigcup_{i=1}^n L_{x_i}(H)
\]
This is the set of all vertices in $\mathcal{G}_{H}$ at which a loop of a basis element is based. Since each $x_i$ is a cyclically reduced element we have $L(H)\subset\mathrm{vert}(\mathcal{C}_{H})$. In particular, the set $L(H)$ is finite since $H$ has finite rank.
\begin{lemma}\label{Claim I}
If $L(H)\neq\emptyset$ then there exists a vertex $v\in L(H)$ such that~$v\not\in\bigcap_{i=1}^n L_{x_i}(H)$.
\end{lemma}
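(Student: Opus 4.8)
The plan is to argue by contradiction and to exploit the transitivity of the $\Gamma$-action on the vertex set of the Schreier graph. Suppose that no vertex $v\in L(H)$ lies outside $\bigcap_{i=1}^n L_{x_i}(H)$. Since $\bigcap_{i=1}^n L_{x_i}(H)\subseteq\bigcup_{i=1}^n L_{x_i}(H)=L(H)$ holds by definition, this supposition is exactly the equality $L(H)=\bigcap_{i=1}^n L_{x_i}(H)$, i.e. $L(H)\subseteq L_{x_i}(H)$ for every $i$. From this I intend to deduce that $L(H)$ equals the entire vertex set $\ver(\G_H)$, which will contradict the infinite index of $H$.

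The key observation is that under this supposition the nonempty set $L(H)$ is invariant under the left $\Gamma$-action on $\ver(\G_H)=\Gamma/H$. Fix $v=gH\in L(H)$ and a generator $x_i$. Because $v\in L_{x_i}(H)$, there is some $m\geq1$ with $x_i^m v=v$, so the $x_i$-orbit of $v$ — equivalently, the component of $v$ in the subgraph of $x_i$-labelled edges, which is a permutation of the vertices — is a finite cycle. Every vertex of a finite $x_i$-cycle is again the basepoint of an $x_i$-loop, so both neighbours $x_i^{\pm1}v$ lie in $L_{x_i}(H)\subseteq L(H)$. As the generators $x_1,\dots,x_n$ generate $\Gamma$, it follows that $L(H)$ is invariant under all of $\Gamma$.

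Since $\Gamma$ acts transitively on $\Gamma/H$ by left multiplication, a nonempty $\Gamma$-invariant subset of $\ver(\G_H)$ is necessarily the whole vertex set; hence $L(H)=\ver(\G_H)$. But $L(H)\subseteq\ver(\mathcal{C}_H)$ is finite because $H$ has finite rank, so $\ver(\G_H)$ is finite, that is $\mathcal{C}_H=\G_H$, and Proposition~\ref{prop-core}(ii) then forces $H$ to have finite index in $\Gamma$. This contradicts the standing hypothesis that $H$ has infinite index, so the desired vertex $v$ must exist.

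The only point that requires care — and the genuine content of the argument — is the invariance step: one must verify that being the basepoint of an $x_i$-loop is shared by all vertices in a common $x_i$-cycle, so that local finiteness at a single vertex $v$ propagates along each generator direction and then globalizes through transitivity. Once invariance is established, the contradiction with infinite index via the finiteness of the core is immediate, and the essentiality of the infinite-index hypothesis is visible from the degenerate case $H=\Gamma$, where $L(H)$ is a single vertex equal to the intersection.
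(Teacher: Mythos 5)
Your proof is correct and is essentially the paper's argument run in contrapositive form: the paper finds a single edge joining $L(H)$ to its complement (nonempty because $H$ has infinite index) and propagates non-membership in $L_{x_j}(H)$ across that edge, whereas you propagate membership outward to conclude that $L(H)$ would be $\Gamma$-invariant and hence, by transitivity, the whole infinite vertex set, contradicting the finiteness of the core. The key mechanism --- that the two endpoints of an $x_j$-labelled edge are either both or neither basepoints of an $x_j$-loop --- and the underlying tension between the finiteness of $L(H)$ and the infinitude of $\ver(\G_H)$ are the same in both versions.
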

\begin{proof}
Since $H$ has infinite index the graph $\mathcal{G}_{H}$ has vertices which are not contained in $L(H)$, for example any vertex outside the core. Therefore we can choose $w\in\mathrm{vert}(\mathcal{G}_{H})$ which is not in $L(H)$ but is adjacent to some $v\in L(H)$. If the edge connecting $v$ with $w$ is labelled $x_j$ then either both or none of $v$ and $w$ are contained in an $x_j$-loop. Since $w$ is not in such a loop, $v$ is neither, and therefore $v\not\in L_{x_j}(H)$.
\end{proof}
\begin{lemma}\label{Claim II}
For all $1\leq i\leq n$ there exists $k>0$ such that
\begin{enumerate}
\item[\I]$\forall v\in L_{x_i}(H):\quad x_i^kv=v$
\item[\II]$\forall v\in\mathrm{vert}(\mathcal{C}_{H})\,\backslash\,L_{x_i}(H):\quad x_i^kv\not\in\mathrm{vert}(\mathcal{C}_{H}).$
\end{enumerate}
\end{lemma}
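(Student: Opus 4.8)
The plan is to work entirely within the finite core $\mathcal{C}_H$, exploiting the fact that a vertex lies in $L_{x_i}(H)$ precisely when the $x_i$-labelled edges emanating from it form a closed loop, and to choose $k$ large enough that iterating $x_i$ the stated number of times either returns to the start (for loop-vertices) or escapes the core (for non-loop-vertices). First I would unpack condition \I: if $v\in L_{x_i}(H)$, then by definition there is some minimal $m=m(v)\geq 1$ with $x_i^m v=v$, so the orbit of $v$ under $x_i$ is a finite cycle of length $m(v)$. Since $L_{x_i}(H)$ is finite (it lies in $\mathrm{vert}(\mathcal{C}_H)$, which is finite by Proposition~\ref{prop-core}\I), I would set $k$ to be a common multiple of all the cycle lengths $m(v)$ as $v$ ranges over $L_{x_i}(H)$; then $x_i^k v=v$ for every such $v$ simultaneously. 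This disposes of \I, and the real content lies in arranging \II while keeping this $k$ (or a multiple of it, which preserves \I) valid.

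For condition \II I would argue that any vertex $v\in\mathrm{vert}(\mathcal{C}_H)\setminus L_{x_i}(H)$ is not the basepoint of an $x_i$-loop, so the $x_i$-path starting at $v$ is an infinite \emph{injective} ray $v, x_i v, x_i^2 v,\dots$ in the Schreier graph (no repetition, since a repetition $x_i^a v = x_i^b v$ with $a<b$ would force $x_i^{b-a}v=v$, contradicting $v\notin L_{x_i}(H)$). The key observation is that the core is finite, so this ray can meet $\mathrm{vert}(\mathcal{C}_H)$ only finitely many times; hence for each such $v$ there is a threshold $N(v)$ beyond which $x_i^t v\notin\mathrm{vert}(\mathcal{C}_H)$ for all $t\geq N(v)$. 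Here I must be slightly careful: a priori the ray might re-enter the core after leaving it. To rule this out I would use that the core is precisely the subgraph traced out by loops without backtracking, so once the $x_i$-ray exits the core along an $x_i$-edge leading to a vertex outside the core, following further $x_i$-edges stays in the infinite ``tree part'' hanging off the core; more robustly, since the ray is injective and the core is finite, only finitely many $t$ can satisfy $x_i^t v\in\mathrm{vert}(\mathcal{C}_H)$, and I take $N(v)$ to exceed the largest such $t$.

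I would then finish by taking $k$ to be any common multiple of all the cycle lengths from \I that is simultaneously larger than $\max_v N(v)$ over the finitely many non-loop vertices $v\in\mathrm{vert}(\mathcal{C}_H)\setminus L_{x_i}(H)$; such a $k$ exists because both the set of cycle lengths and the set of thresholds are finite, and any sufficiently large multiple of the cycle-length common multiple works. With this single $k$, condition \I holds because $k$ is a multiple of each $m(v)$, and condition \II holds because $k\geq N(v)$ forces $x_i^k v\notin\mathrm{vert}(\mathcal{C}_H)$ for every non-loop core vertex. The main obstacle I anticipate is the re-entry issue flagged above — ensuring that ``leaves the core'' really means ``stays out for all larger exponents'' rather than merely ``is outside at exponent $N(v)$'' — but the injectivity of the $x_i$-ray combined with the finiteness of the core (Proposition~\ref{prop-core}\I) resolves it cleanly, since an injective sequence can intersect a finite set only finitely often.
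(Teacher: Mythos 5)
Your proposal is correct and follows essentially the same route as the paper: take $k$ to be a suitable multiple of the lcm of the $x_i$-cycle lengths (the paper uses $N\cdot\mathrm{lcm}$ of simple $x_i$-loop lengths), verify \I{} by divisibility, and verify \II{} by noting that the $x_i$-orbit of a non-loop core vertex cannot stay in the finite core. Your explicit treatment of the re-entry issue via injectivity of the ray is a welcome clarification of a point the paper leaves implicit, but it is the same argument.
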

\begin{proof}
We define
\[
k:= N\cdot\mathrm{lcm}\{\ell\,|\,\ell\mbox{ is the length of a simple $x_i$-loop in $\mathcal{G}_{H}$}\}
\]
for a suitable number $N>0$.
Note that the set of which we are taking the least common multiple is finite, since the graph $\mathcal{G}_{H}$ contains only finitely many simple loops. If the set is actually empty (i.e.~if $L_{x_i}(H)=\emptyset$) then we define the lcm to be $1$. The first condition is obviously satisfied for such a $k$. For $v$ as in the second condition, consider the sequence of vertices $v,x_iv,x_i^2v,\dots$. This sequence must eventually leave the core: Otherwise it would contain a loop and this loop would contain $v$, a contradiction since $v$ is not contained in any $x_i$-loop. 
Now choosing $N$ sufficiently large we have that $x_i^kv$ is outside the core for all of the finitely many vertices $v\in\mathrm{vert}(\mathcal{C}_{H})\,\backslash\,L_{x_i}(H)$.
\end{proof}
\begin{lemma}\label{Claim III}
Let $1\leq i\leq n$ and let $k>0$ be the number from Lemma~\ref{Claim II}. For the automorphism $\varphi=\varphi_{i,-k}$ the bijection $\varphi^{-1}=\varphi_{i,k}:\mathrm{vert}(\mathcal{G}_{\varphi(H)})\longrightarrow\mathrm{vert}(\mathcal{G}_{H})$ restricts to a map
\[
L(\varphi(H))\longrightarrow L_{x_i}(H).
\]
\end{lemma}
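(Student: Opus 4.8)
The plan is to reduce the statement to a single set-theoretic inclusion and then to invoke property \II{} of Lemma~\ref{Claim II}. First I would use the functoriality of the loop sets under automorphisms recorded above: applied to $\psi=\varphi^{-1}=\varphi_{i,k}$ and the subgroup $\varphi(H)$, the induced bijection $\varphi^{-1}\colon\mathrm{vert}(\mathcal{G}_{\varphi(H)})\to\mathrm{vert}(\mathcal{G}_{H})$ restricts to bijections $L_{x}(\varphi(H))\to L_{\varphi^{-1}(x)}(H)$ for every $x\in\Gamma$. Since $L(\varphi(H))=\bigcup_{j=1}^{n}L_{x_j}(\varphi(H))$, and since $\varphi^{-1}=\varphi_{i,k}$ fixes $x_i$ while sending $x_j\mapsto x_jx_i^{k}$ for $j\neq i$, the image of $L_{x_i}(\varphi(H))$ is exactly $L_{x_i}(H)$, whereas the image of $L_{x_j}(\varphi(H))$ is $L_{x_jx_i^{k}}(H)$ for $j\neq i$. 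Thus the lemma comes down to the inclusion $L_{x_jx_i^{k}}(H)\subseteq L_{x_i}(H)$ for each $j\neq i$.

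To prove this inclusion, I would fix $j\neq i$ and take $v\in L_{x_jx_i^{k}}(H)$, so that $(x_jx_i^{k})^{m}v=v$ for some $m\geq1$. Set $w=x_jx_i^{k}$; since $j\neq i$ and $k>0$, its first letter $x_j$ is not the inverse of its last letter $x_i$, so $w$ is cyclically reduced and hence by \eqref{eq:core} its basepoint $v$ lies in $\mathrm{vert}(\mathcal{C}_{H})$. The key observation is that $x_i^{k}v$ lies in the core as well: for the cyclic permutation $w'=x_i^{k}x_j$ of $w$ one computes $(w')^{m}(x_i^{k}v)=(x_i^{k}x_j)^{m}x_i^{k}v=x_i^{k}(x_jx_i^{k})^{m}v=x_i^{k}v$, and $w'$ is again cyclically reduced, so $x_i^{k}v\in L_{w'}(H)\subseteq\mathrm{vert}(\mathcal{C}_{H})$ by \eqref{eq:core}.

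Finally I would apply property \II{} of Lemma~\ref{Claim II} in contrapositive form: it states that every vertex of $\mathrm{vert}(\mathcal{C}_{H})\setminus L_{x_i}(H)$ is sent by $x_i^{k}$ outside the core. Since our $v$ satisfies both $v\in\mathrm{vert}(\mathcal{C}_{H})$ and $x_i^{k}v\in\mathrm{vert}(\mathcal{C}_{H})$, it cannot lie in $\mathrm{vert}(\mathcal{C}_{H})\setminus L_{x_i}(H)$, whence $v\in L_{x_i}(H)$, as required. The only step beyond routine bookkeeping is establishing that $x_i^{k}v$ remains in the core; this is exactly what the cyclic-permutation identity furnishes, and it is what makes condition \II{} of Lemma~\ref{Claim II} applicable.
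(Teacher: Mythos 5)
Your proposal is correct and follows essentially the same route as the paper: reduce the statement to the inclusion $L_{x_jx_i^k}(H)\subseteq L_{x_i}(H)$ via the functoriality of loop sets, place the basepoint in the core using cyclic reducedness and \eqref{eq:core}, and conclude with part \II{} of Lemma~\ref{Claim II}. The only (welcome) difference is presentational: where the paper derives a contradiction by pushing $x_i^kw$ and then $x_jx_i^kw$ outside the core, you show directly via the cyclic permutation $x_i^kx_j$ that $x_i^kv$ lies in the core, which makes the applicability of \II{} fully explicit.
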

\begin{proof}
Let $v\in L_{x_j}(\varphi(H))$ for some $1\leq j\leq n$. If $i=j$ then $\varphi^{-1}(v)\in L_{\varphi^{-1}(x_i)}(H)=L_{x_i}(H)$. 
If $i\neq j$ then $\varphi^{-1}(v)\in L_{\varphi^{-1}(x_j)}(H)=L_{x_jx_i^k}(H)$.
We claim that $L_{x_jx_i^k}(H)\subseteq L_{x_i}(H)$, hence $\varphi^{-1}(v)\in L_{x_i}(H)$.
To prove the claim, 
let us pick $w \in L_{x_jx_i^k}(H)$. Since $x_jx_i^k$ is a cyclically reduced element then $w\in\ver(\mathcal{C}_H)$ using the identification \eqref{eq:core}. Suppose by contradiction that $w \notin L_{x_i}(H)$. Then by Lemma \ref{Claim II}-(ii) 
$x_i^kw$ is outside the core, 
and in particular $x_jx_i^kw$ is outside the core. This yields a contradiction. 
\end{proof}
We are now ready to conclude the proof of Lemma~\ref{lem-rel-h2b}.
\begin{proof}[Proof of Lemma~\ref{lem-rel-h2b}]
Lemma~\ref{Claim I} tells us that there is an index $1\leq i\leq n$ such that $L_{x_i}(H)\subsetneq L(H)$. Let $k$ be the associated number from Lemma~\ref{Claim II} and let $\varphi$ be the automorphism from Lemma~\ref{Claim III}. Since the map $L(\varphi(H))\longrightarrow L_{x_i}(H)$ is injective we have
\[
|L(\varphi(H))|\leq |L_{x_i}(H)|<|L(H)|.
\]
By replacing the subgroup $H$ with its image $\varphi(H)$ we reduced the number of vertices that are contained in a basic loop. 
If we iterate this argument we obtain after finitely many steps an automorphism $\psi\in\Aut(\Gamma)$ for which $L(\psi(H))=\emptyset$. So we have $L_{x_i}(\psi(H))=\emptyset$ for all $i$, and this means that
\begin{align*}
\forall i\,\forall g\in\Gamma:\quad g\psi(H)g^{-1}\cap\langle x_i\rangle=\{1\},
\end{align*}
which amounts to saying
\begin{align*}
\forall i\,\forall g\in\Gamma:\quad gH g^{-1}\cap\langle y_i\rangle=\{1\},
\end{align*}
where $y_i=\psi^{-1}(x_i)$.
\end{proof}
\begin{lemma}\label{lem-relative-2} Let $\Gamma$ be a free group of finite rank $n\geq2$ and let $H<\Gamma$ be a subgroup of finite rank and infinite index. Assume that $\{x_1,\dots,x_n\}$ is a basis of $\Gamma$ such that $gHg^{-1}\cap\langle x_i\rangle=\{1\}$ for all $i$ and all $g\in\Gamma$. Then there is a number $m_0>0$ such that no element of $H$ contains a power $x_i^m$ with $m>m_0$ as a subword.
\end{lemma}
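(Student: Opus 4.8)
The plan is to pass to the Schreier graph $\mathcal{G}_H$ and read the bound off from the finiteness of the core. The first step is to recognize that the hypothesis is exactly the statement $L_{x_i}(H)=\emptyset$ for every $i$: a vertex $gH$ lies in $L_{x_i}(H)$ precisely when $x_i^{n}gH=gH$ for some $n\geq1$, i.e. when $x_i^{n}\in gHg^{-1}$, so the assumption $gHg^{-1}\cap\langle x_i\rangle=\{1\}$ for all $g$ says that no such vertex exists. In particular each basis element $x_i$ acts on $\ver(\mathcal{G}_H)$ with no finite orbits, so $\mathcal{G}_H$ contains no $x_i$-loop whatsoever.

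Now I would take a reduced $h\in H$ containing $x_i^{m}$ as a subword and read $h$ as an edge-path starting at the base vertex $H$; the occurrence of $x_i^{m}$ produces a chain of consecutive $x_i$-edges $w_0,\ w_1=x_iw_0,\dots,\ w_m=x_i^{m}w_0$. Two observations then finish the argument. First, these $m+1$ vertices are pairwise distinct: if $w_a=w_b$ with $a<b$, then $x_i^{\,b-a}$ fixes $w_a$, forcing $w_a\in L_{x_i}(H)=\emptyset$. Second, all of them lie in a fixed finite subgraph. Since $H$ has finite rank, $\mathcal{C}_H$ is finite by Proposition~\ref{prop-core}; adjoining to $\mathcal{C}_H$ the finite reduced path (the ``stem'') joining $H$ to $\mathcal{C}_H$ yields a finite subgraph $\Delta_H\subseteq\mathcal{G}_H$, the pointed core graph of Stallings~\cite{Stallings}. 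Because $\mathcal{G}_H\setminus\mathcal{C}_H$ is a forest of trees attached to $\mathcal{C}_H$, and a loop without backtracking can neither enter one of these trees and come back nor run off to infinity inside one, every reduced loop based at $H$—in particular the path spelled by $h$—stays inside $\Delta_H$. Hence $w_0,\dots,w_m\in\ver(\Delta_H)$.

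Combining the two observations, the $m+1$ distinct vertices $w_0,\dots,w_m$ all lie in the finite set $\ver(\Delta_H)$, whence $m+1\leq|\ver(\Delta_H)|$. As $\Delta_H$ depends neither on $i$ nor on $h$, the number $m_0:=|\ver(\Delta_H)|$ works simultaneously for every index and every element, and negative powers $x_i^{-m}$ are covered by applying the same bound to $h^{-1}\in H$. I expect the confinement step to be the only delicate point: one is tempted to assert that the whole loop lies in $\mathcal{C}_H$, but this is false in general, since the base vertex $H$ may sit at the end of a nontrivial stem and an $x_i$-run may use stem vertices (as in $H=\langle x_1^{2}x_2x_1^{-1}\rangle$). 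The argument must therefore be carried out in the pointed core $\Delta_H$ rather than in $\mathcal{C}_H$, the crucial input being precisely that no reduced loop can escape into the infinite tree-branches of $\mathcal{G}_H$.
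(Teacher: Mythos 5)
Your argument is correct and runs along the same lines as the paper's: read $h$ as a reduced edge-path in the Schreier graph, note that the hypothesis (equivalently $L_{x_i}(H)=\emptyset$) forces the $m+1$ vertices along an $x_i$-run to be pairwise distinct, and trap them in a finite subgraph to get the bound by pigeonhole. The one point where you deviate is precisely a point where your version is \emph{more} careful than the published one. The paper's proof opens by silently restricting to \emph{cyclically reduced} elements of $H$ and then confines the corresponding loop to the core $\mathcal{C}_H$; but the lemma (and its later use, where split quasimorphisms are evaluated on the syllables of arbitrary reduced words of elements of $H$) concerns \emph{all} elements of $H$. This restriction is not harmless: for $H=\langle x_1^{1000}(x_1x_2)x_1^{-1000}\rangle=\langle x_1^{1001}x_2x_1^{-1000}\rangle$, which satisfies the hypothesis $gHg^{-1}\cap\langle x_i\rangle=\{1\}$, every nontrivial element contains $x_1^{1000}$ as a subword while $H$ contains no nontrivial cyclically reduced element at all, so the core alone (two vertices here) cannot furnish the bound. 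Your replacement of $\mathcal{C}_H$ by the pointed core $\Delta_H$ (core together with the stem joining the base vertex $H$ to it), plus the observation that a loop without backtracking based at $H$ cannot enter the trees hanging off $\Delta_H$, is exactly what is needed to cover arbitrary elements and yields the correct uniform bound $m_0=|\ver(\Delta_H)|$. So: same strategy, but your execution repairs a genuine (if easily fixable) gap in the paper's own proof.
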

\begin{proof} Assume that, under the assumptions of the lemma, we can find infinitely many powers $x_i^m$ as subwords of cyclically reduced elements of $H$. In particular, $H$ contains an element of the cyclically reduced form $yx_i^mz$ for some $m>\#\ver(\mathcal{C}_H)$ and some $i$. (Note that the core $\mathcal{C}_H$ is finite by Proposition~\ref{prop-core}.) The path in the Schreier graph $\G_H$ which starts at the vertex $H$ and is described by this element is then a loop without backtracking based at $H$. This loop is contained in $\mathcal{C}_H$, in particular the following vertices on this loop are contained in $\ver(\mathcal{C}_H)$:
\[
x_izH,\,x_i^2zH,\,\dots,\,x_i^mzH
\]
By the choice of $m$, there exist $1\leq k,l\leq m$, $k\neq l$, such that $x_i^kzH=x_i^lzH$, which means that $x_i^{k-l}\in zHz^{-1}$, a contradiction.
\end{proof}
\noindent Combining the Lemmas~\ref{lem-rel-h2b} and \ref{lem-relative-2} immediately yields
\begin{lemma}\label{lem-final}Let $\Gamma$ be a free group of finite rank $n\geq2$ and let $H<\Gamma$ be a subgroup of finite rank and infinite index. There exists a basis $\{x_1,\dots,x_n\}$ of $\Gamma$ that has the following property: There is a number $m_0>0$ such that no element of $H$ contains a power $x_i^m$ with $m>m_0$ as a subword.
\end{lemma}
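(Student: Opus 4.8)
The plan is to obtain the statement as a direct concatenation of the two preceding lemmas, with no additional work beyond matching their hypotheses and conclusions. First I would invoke Lemma~\ref{lem-rel-h2b}, which under exactly the present hypotheses on $\Gamma$ and $H$ (free of finite rank $n\geq 2$, with $H$ of finite rank and infinite index) produces a basis $\{x_1,\dots,x_n\}$ of $\Gamma$ satisfying
\[
gHg^{-1}\cap\langle x_i\rangle=\{1\}\qquad\text{for all }g\in\Gamma\text{ and all }i.
\]
This is the genuine geometric input: it is the basis constructed via the concatenation $\psi$ of the elementary automorphisms $\varphi_{i,k}$ that empties out the set $L(H)$ of basepoints of basic loops in the Schreier graph.

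Next I would feed this very basis into Lemma~\ref{lem-relative-2}. Its hypothesis is precisely the condition just secured, and its conclusion is verbatim the assertion we want: there exists $m_0>0$ such that no element of $H$ contains a power $x_i^m$ with $m>m_0$ as a subword. Thus the two lemmas chain together directly, with the output basis of the first serving as the input basis of the second, and the combination is immediate.

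Consequently the only real content sits in the two auxiliary results, not in their combination, so I would not expect any new difficulty in writing down the final lemma itself. The main obstacle is entirely upstream, in Lemma~\ref{lem-rel-h2b}: one must show that iterating the automorphisms $\varphi_{i,-k}$ strictly decreases the finite cardinality $|L(H)|$ at each step (via Lemmas~\ref{Claim I}--\ref{Claim III}), so that after finitely many steps $L(\psi(H))=\emptyset$. The pigeonhole argument inside Lemma~\ref{lem-relative-2} --- that a sufficiently long $x_i$-power subword of a cyclically reduced element of $H$ forces two of the finitely many core vertices $x_i^k zH$ to coincide, whence $x_i^{k-l}\in zHz^{-1}$, contradicting the condition from Lemma~\ref{lem-rel-h2b} --- is comparatively routine once $\mathcal{C}_H$ is known to be finite by Proposition~\ref{prop-core}.
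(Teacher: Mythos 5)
Your proposal matches the paper exactly: the paper states that Lemma~\ref{lem-final} follows by combining Lemmas~\ref{lem-rel-h2b} and \ref{lem-relative-2}, feeding the basis produced by the first into the hypothesis of the second, which is precisely your chain. The argument is correct and there is nothing further to add.
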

\begin{proof}[Proof of Theorem~\ref{thm_main:intro}]
The implications (iv)$\Rightarrow$(iii) and (iii)$\Rightarrow$(ii) are obvious and the implication (ii)$\Rightarrow$(i) follows from Proposition~\ref{rel-f-ind}. 
In order to prove (i)$\Rightarrow$(iv) we let $\{x_1,\dots,x_n\}$ be the basis of $\Gamma$ given by Lemma~\ref{lem-final} and we let $m_0>0$ be the number that is also given by this Lemma. Associated to the splitting $\Gamma=\langle x_1\rangle\ast\langle x_2\rangle\ast\cdots\ast\langle x_n\rangle$ we have the isometric embedding
\[
I:\,\oplus_\infty^n\D(\Z)\hooklongrightarrow\h^2_\bbb(\Gamma,\RR)
\]
from Corollary \ref{epi} (We are using the obvious generalization of the statement to the case of several free factors). By 
Proposition~\ref{prop:B.3} we have an isometric embedding $j:\D(m_0\Z)\hooklongrightarrow\D(\Z)$ from which we obtain the isometric embedding
\[
J:=\oplus^nj:\,\oplus_\infty^n\D(m_0\Z)\hooklongrightarrow\oplus_\infty^n\D(\Z).
\]
By Proposition~\ref{embedding} the relative bounded cohomology $\h^2_\bbb(\Gamma,H;\RR)$ is embedded in $\h^2_\bbb(\Gamma,\RR)$, and we claim that the concatenation $I\circ J$ ranges in this subspace. So let $\{f_i\}=\{f_i\}_{1\leq i\leq n}\in\oplus^n_\infty\D(m_0\Z)$. Then $J(\{f_i\})=\{j(f_i)\}$, and each of the functions $j(f_i)\in\D(\Z)$ is supported on the subgroup $m_0\Z$, by construction of the embedding $j$. Write $f:=j(f_1)\ast\cdots\ast j(f_n)$ for the split quasimorphism obtained from these functions. This quasimorphism has the property that $f(g)=0$ if $g$ is an element whose factorization contains no factor of the form $x_i^{k\cdot m_0}$, $k\neq0$. Since we have chosen $m_0$ according to Lemma~\ref{lem-final}, we know that in the factorization of each element of $H$ only exponents less than $m_0$ occur. Therefore we have $f|_H=0$, i.e.~$f\in\QM(\Gamma,H)$. In particular we have $d^1 f|_{H\times H}=0$, so the class $(I\circ J)(\{f_i\})=[d^1 f]_\bbb$ is in fact contained in $\h^2_\bbb(\Gamma,H;\RR)$. Therefore we have an embedding
\[
\oplus_\infty^n\D(\Z)\cong\oplus_\infty^n\D(m_0\Z)\hooklongrightarrow\h^2_\bbb(\Gamma,H;\RR).
\]
We are left with showing that this embedding is isometric. Let $i_b:\h^2_\bbb(\Gamma,H;\RR)\hooklongrightarrow\h^2_\bbb(\Gamma,\RR)$ be the embedding from Proposition~\ref{embedding}. Let $\{f_i\}\in\infplus^n\D(m_0\Z)$ as before, with associated split quasimorphism $f$. Let $\omega_f\in\h^2_\bbb(\Gamma,H;\RR)$ be the image of $\{f_i\}$ under the embedding in question. We have
\[
\deff f=\|i_b(\omega_f)\|_\infty\leq\|\omega_f\|_\infty\leq\|d^1f\|_\infty=\deff f
\]
where the first equality comes from Theorem \ref{defect norm}, the first inequality is due to the fact that $i_b$ is norm non-increasing (Proposition~\ref{embedding}.(ii)), and the second inequality is because $d^1 f$ is a representative of the relative class $\omega_f$. It follows that
\[
\|\omega_f\|_\infty=\deff f=\max_i\{\deff f_i\}=\|\{f_i\}\|.\qedhere
\]
\end{proof}

\vskip1cm

\providecommand{\bysame}{\leavevmode\hbox to3em{\hrulefill}\thinspace}
\providecommand{\MR}{\relax\ifhmode\unskip\space\fi MR }
\providecommand{\MRhref}[2]{%
  \href{http://www.ams.org/mathscinet-getitem?mr=#1}{#2}
}
\providecommand{\href}[2]{#2}

\end{document}